\documentclass[12pt]{amsart}
\usepackage{amssymb}

% Stellen Sie in den folgenden beiden Zeilen die Texthoehe und die Textbreite ein 
\textheight 23.2cm
\textwidth  16cm
% Richtwerte: \textheight     \textwidth

\voffset -.4in      %   Verschiebung nach oben
\hoffset -.8in      %   Verschiebung nach links

\swapnumbers

%  Es folgen die Theoremumgebungen, zuerst die kursiv geschriebenen
%  neue bitte unten anfuegen

\newtheorem{thm}{Theorem}[section]
\newtheorem{lem}[thm]{Lemma}

\newtheorem{prop}[thm]{Proposition}
\newtheorem{example}[thm]{Example}

\newtheorem*{prob*}{Open problem}

\theoremstyle{definition}
%  Nun die wie eine Definition geschriebenen

\newtheorem{defi}[thm]{Definition}

\theoremstyle{remark}
%  Nun die Bemerkungen

\newtheorem{rem}[thm]{Remark}
\newtheorem*{rem*}{Remark}

% Jetzt folgen erst mal die Definitionen einiger Sonderzeichen

\DeclareMathOperator{\s}{span}

\DeclareMathOperator{\Hom}{Hom}

\newcommand{\kringel}{\mathbin{\raise1pt\hbox{$\scriptstyle\circ$}}} 
\newcommand{\pkt}{\mathbin{\raise0pt\hbox{$\scriptstyle\bullet$}}}

\newcommand{\C}{\mathbb{C}}

\newcommand{\N}{\mathbb{N}}

\newcommand{\ad}{\mathop{\rm ad}}

\newcommand{\La}{\mathfrak{a}}

\newcommand{\Lf}{\mathfrak{f}}
\newcommand{\Lg}{\mathfrak{g}}
\newcommand{\Lh}{\mathfrak{h}}

\newcommand{\Ln}{\mathfrak{n}}

\newcommand{\Lz}{\mathfrak{z}}

\newcommand{\LA}{\mathfrak{A}}
\newcommand{\LF}{\mathfrak{F}}

\newcommand{\CA}{\mathcal{A}}

\newcommand{\CL}{\mathcal{L}}

\newcommand{\CF}{\mathcal{F}}
\newcommand{\CI}{\mathcal{I}}

\newcommand{\mi}{\boldsymbol{-}}  
\newcommand{\im}{\mathop{\rm im}}

\newcommand{\al}{\alpha}
\newcommand{\be}{\beta}
\newcommand{\ga}{\gamma}
\newcommand{\de}{\delta}

\newcommand{\la}{\lambda}
\newcommand{\om}{\omega}

\newcommand{\ov}{\overline}

\newcommand{\ra}{\rightarrow}  

\renewcommand{\phi}{\varphi}

\begin{document}

% Ab hier duerfen Sie wieder.

\title[Affine cohomology classes]{Affine cohomology classes for filiform Lie algebras} 
%  Die Kurzfassung kommt oben ueber die Seiten, sie steht in eckigen Klammern
%  Auch Autorennamen koennen eine Kurzfassung haben
%  Jeder Autor kriegt sein eigenes Feld, die wichtigsten stehen schon drin
%  Bitte bei Nichtgebrauch löschen, nicht bloss auskommentieren!

\author[D. Burde]{Dietrich Burde}
\address{Mathematisches Institut\\
  Heinrich-Heine-Universit\"at\\
  Universit\"atsstr. 1\\
  40225 D\"us\-sel\-dorf\\
  Germany}
\email{dietrich@math.uni-duesseldorf.de}

\subjclass{Primary 17B56, 17B30}

\begin{abstract}

We classify the cohomology spaces $H^2(\Lg,K)$ for all
filiform nilpotent Lie algebras of dimension $n\le 11$ over $K$
and for certain classes of algebras of dimension $n\ge 12$.
The result is applied to the determination of
affine cohomology classes $[\om]\in H^2(\Lg,K)$.
We prove the general result that the
existence of an affine cohomology class implies an affine
structure of canonical type on $\Lg$, 
hence a canonical left-invariant affine structure
on the corresponding nilpotent Lie group.
For certain filiform algebras the absence of an affine
cohomology class implies the nonexistence of any affine
structure. Of particular interest are algebras $\Lg$
with minimal Betti numbers $b_1(\Lg)=b_2(\Lg)=2$.
\end{abstract}

\maketitle

\section{Introduction}

Left-invariant affine structures on Lie groups play an important role
in the study of fundamental groups of compact affine manifolds
and in the study of affine crystallographic groups. 
Milnor asked in his paper \cite{M2} on fundamental groups of complete 
affine manifolds whether or not every solvable Lie group admits a complete
left-invariant affine structure. There was evidence that the answer should 
be positive. Auslander \cite{A2} had proved a converse statement:
A Lie group admitting a complete left-invariant affine structure is solvable.
Milnor's question became known as a conjecture.
After a long history it was finally answered negatively.
It was proved that there exist nilpotent Lie groups without any left-invariant
affine structures, see \cite{B2},\cite{BG},\cite{BU3}.
All the known counterexamples are filiform nilpotent Lie groups
of dimension $10\le n \le 13$.
The result also implies that there exist
finitely generated torsionfree nilpotent groups which are not the fundamental
group of any compact complete affine manifold.
Moreover it follows that there exist nilmanifolds without any affine or
projective structure. 
Besides the counterexamples there do not exist many results on
Milnor's question. It is still unknown if there exist counterexamples
in every dimension $n\ge 10$.  
The problem can be formulated in purely algebraic terms: 

\begin{defi}
Let $A$ be a vector space
over a field $K$ and let $A \times A \rightarrow A,
(x,y) \mapsto x\cdot y$ be a $K$--bilinear product which satisfies
\begin{equation}\label{lsa1}
x\cdot (y\cdot z)-(x\cdot y)\cdot z= y\cdot (x\cdot z)-(y\cdot x)\cdot z
\end{equation}
for all $x,y,z \in A$. Then $A$ together with the product
is called {\it left-symmetric algebra} or
LSA. The product is also called left-symmetric.
\end{defi}
The term left-symmetric becomes evident, if we rewrite condition
$(\ref{lsa1})$ as
$(x,y,z)=(y,x,z)$, where $(x,y,z)=x\cdot (y\cdot z)-(x\cdot y)\cdot z$.

\begin{defi}\label{affine}
An {\it affine structure} or {LSA--structure} on a Lie algebra
$\Lg$ is a $K$--bilinear product $\Lg \times \Lg \rightarrow \Lg$
which is left-symmetric and satisfies
\begin{equation}\label{lsa2}
[x,y]=x\cdot y -y\cdot x
\end{equation}
\end{defi}

\begin{prop}
Let $G$ be a simply connected Lie group with Lie algebra $\Lg$.
There is a canonical one-to-one correspondence between left-invariant 
affine structures on $G$ and affine structures on $\Lg$. 
\end{prop}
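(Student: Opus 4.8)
The plan is to realize a left-invariant affine structure on $G$ concretely as a left-invariant flat torsion-free affine connection $\nabla$, and then to translate the two conditions on $\nabla$ into the LSA axioms $(\ref{lsa1})$ and $(\ref{lsa2})$ on $\Lg$. First I would recall the standard dictionary: an affine structure on a manifold $M$ (an atlas with transition maps in $\Aff(n,K)$) is the same thing as a flat torsion-free connection on $TM$, and since $G$ is simply connected the associated developing map is globally defined and a local diffeomorphism, so nothing is lost in this reformulation; a \emph{left-invariant} such structure corresponds to a connection $\nabla$ invariant under all left translations $L_g$.

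Given $\nabla$, define a product on $\Lg$ by $x\cdot y:=(\nabla_XY)_e$, where $X,Y$ are the left-invariant vector fields extending $x,y\in\Lg$. Because $\nabla$ is $L_g$-invariant, $\nabla_XY$ is again left-invariant, so $x\cdot y\in\Lg$ is well defined and the product is $K$-bilinear. Now torsion-freeness, $\nabla_XY-\nabla_YX-[X,Y]=0$, reads exactly $x\cdot y-y\cdot x=[x,y]$, i.e.\ $(\ref{lsa2})$. Flatness, $R(X,Y)Z=\nabla_X\nabla_YZ-\nabla_Y\nabla_XZ-\nabla_{[X,Y]}Z=0$, reads $x\cdot(y\cdot z)-y\cdot(x\cdot z)-[x,y]\cdot z=0$; substituting $[x,y]=x\cdot y-y\cdot x$ from $(\ref{lsa2})$ and rearranging turns this into $x\cdot(y\cdot z)-(x\cdot y)\cdot z=y\cdot(x\cdot z)-(y\cdot x)\cdot z$, which is precisely $(\ref{lsa1})$. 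Hence $x\cdot y$ is an affine structure on $\Lg$ in the sense of Definition \ref{affine}.

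Conversely, starting from an LSA product on $\Lg$, I would define $\nabla$ on a left-invariant frame $e_1,\dots,e_n$ of $TG$ by $\nabla_{e_i}e_j:=(e_i\cdot e_j)^{\sim}$ and extend it to arbitrary vector fields by the connection axioms (tensoriality in the first argument, Leibniz rule in the second); one checks this is a well-defined affine connection, manifestly left-invariant, and that reversing the computations above shows it is torsion-free by $(\ref{lsa2})$ and flat by $(\ref{lsa1})$. Since both constructions are completely determined by the values on a left-invariant frame, they are mutually inverse, which yields the asserted bijection. The only points requiring real care are the equivalence ``affine structure $=$ flat torsion-free connection'' invoked at the outset (and the role of simple connectedness in globalizing the developing map), and, on the algebraic side, checking that the extension of $\nabla$ from the frame to all vector fields is independent of the chosen frame and genuinely satisfies the connection axioms; the curvature-to-left-symmetry translation itself, once $(\ref{lsa2})$ is available to eliminate the $\nabla_{[X,Y]}$ term, is a short manipulation rather than a genuine obstacle.
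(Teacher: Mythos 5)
Your argument is correct and is the standard proof of this correspondence: identify a left-invariant affine structure with a left-invariant flat torsion-free connection $\nabla$, set $x\cdot y=(\nabla_XY)_e$ on left-invariant fields, and read off vanishing torsion as condition \eqref{lsa2} and vanishing curvature (after eliminating $\nabla_{[X,Y]}$ via \eqref{lsa2}) as the left-symmetry condition \eqref{lsa1}. The paper states this proposition without proof, treating it as standard background, so there is no alternative argument to compare against; your write-up, including the converse via Christoffel symbols on a left-invariant frame, fills that gap correctly.
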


The algebraic version of Milnor's question is the following:
which Lie algebras over a field $K$ do admit affine structures. 
As mentioned above it is not quite true that all solvable
Lie algebras admit an affine structure, although they tend to
admit one. The case of reductive Lie algebras also is very interesting.
There is a large literature on the subject, see for example \cite{BU2} 
and the references cited within. 
Milnor's question is related to a refinement of Ado's theorem
for finite-dimensional Lie algebras. If a Lie algebra $\Lg$ of dimension
$n$ admits an affine structure then $\Lg$ possesses a {\it faithful}
Lie algebra module of dimension $n+1$. The counterexamples to the
existence of affine structures rely on the fact that not all nilpotent
Lie algebras possess such a faithful module.
It is however still very difficult to determine which nilpotent Lie
algebras admit such modules. It requires a lot of computations to
find out that just one particular nilpotent Lie algebra does not have
a faithful module of small dimension. In general the question seems
to be completely hopeless unless the Lie algebra $\Lg$ is $2$-step, $3$-step
or $n-1$-step nilpotent, where $n$ denotes the dimension of $\Lg$.
However, in the first two cases there always exist an affine structure and hence
a faithful module of dimension $n+1$. The last case corresponds to filiform 
algebras. It is well known \cite{AS} that filiform algebras play also an 
interesting role in the study of Betti numbers $b_i(\Lg)$ of nilpotent 
Lie algebras $\Lg$. They produce lower bounds for the Betti
numbers. More precisely computations have shown that for any small $n$, there exists 
a filiform algebra $\Lf_n$ such that $b_i(\Lf_n)\le b_i(\Lg)$ for all $i$ and all 
nilpotent Lie algebras $\Lg$ of dimension $n$. 
Such filiform Lie algebras very often do not admit any affine structure.
The first step is to study the cohomology spaces $H^2(\Lg,K)$. 
We will compute the cohomology for all filiform Lie algebras of dimension
$n\le 11$ and for filiform Lie algebras of dimension $n\ge 12$ satisfying
certain properties. Here the algebras $\Lg$ of a certain subclass 
have minimal Betti numbers $b_1(\Lg)$ and $b_2(\Lg)$ equal to $2$. 
We conjecture that these algebras for $n\ge 13$ do not admit any affine structure.
We have proved it for $n=13$ so far.\\
The present article contains some results without proofs, which will appear
in an extended version.

\section{Filiform nilpotent Lie algebras}

In the study of nilpotent Lie algebras the filiform algebras
play an important role.

\begin{defi}
Let $\Ln$ be a Lie algebra over a field $K$. The lower central series
$\{\Ln^k\}$ of $\Ln$ is defined by
$\Ln^0=\Ln ,\; \Ln^k =[\Ln^{k-1},\Ln], \; k\ge 1.$
The integer $p$ is called {\it nilindex} of $\Ln$ if $\Ln^{p}=0$ and
$\Ln^{p-1}\ne 0$. In that case $\Ln$ is called $p$--step nilpotent.
A nilpotent Lie algebra $\Ln$ of dimension $n$ and nilindex $p=n-1$ is
called {\it filiform}.
\end{defi}

\begin{rem}
If we denote the {\it type} of a nilpotent Lie algebra
$\Ln$ by $\{p_1,p_2,\ldots ,p_r\}$ where $\dim (\Ln^{i-1}/\Ln^i)=
p_i$ for all $i=1,\ldots ,r$, then the filiform Lie algebras
are just the algebras of type $\{2,1,1,\ldots,1\}$. That explains the
name "filiform" which means threadlike. Note that the center of a
filiform Lie algebra is one-dimensional.
\end{rem}

\begin{example}\label{std}
Let $L=L(n)$ be the $n$-dimensional Lie algebra defined by
\begin{align*}
[e_1,e_i] & = e_{i+1}, \; i=2,\dots , n-1
\end{align*}
where $(e_1,\dots , e_n)$ is a basis of $L(n)$ and the undefined
brackets are zero. This is called the {\it standard graded filiform}.
\end{example}

It turns out to be useful to consider {\it adapted bases} for filiform
Lie algebras. We need a few definitions:

\begin{defi}
Let $\CI_n$ be an index set given by
\begin{align*}
\CI_n^0 &=\{(k,s)\in \N \times \N \mid 2 \le k \le [n/2],\,
2k+1 \le s \le n \},\\
\CI_n& =\begin{cases}
\CI_n^0 & \text{if $n$ is odd},\\
\CI_n^0 \cup \{(\frac{n}{2},n)\} & \text{if $n$ is even}.
\end{cases}
\end{align*}
\end{defi}

Let $(e_1,\ldots , e_n)$ be a basis of $L(n)$ with $[e_1,e_{i}]=e_{i+1}$
for $2\le i\le n-1$.
For any element $(k,s)\in \CI_n$ we can associate a $2$--cocycle
$\psi_{k,s}\in Z^2(L,L)$ for the Lie algebra cohomology with
coefficients in the adjoint module $L$ as follows:
\begin{align*}
\psi_{k,s}(e_1\wedge e_i) & = 0,\\
\psi_{k,s}(e_k\wedge e_{k+1}) & = e_s
\end{align*}
for $1\le i\le n,\,  2\le k\le n-1$.
Then the condition $\psi_{k,s}\in Z^2(L,L)$ for basis vectors
$e_1,e_i,e_j$ with $2\le i,j$ is given by
\begin{equation*}
[e_1, \psi_{k,s}(e_i\wedge e_j)] = \psi_{k,s}([e_1,e_{i}]\wedge e_j)+
\psi_{k,s}(e_i \wedge [e_1,e_{j}])
\end{equation*}
and we obtain the following formula:
\begin{equation}\label{psiks}
\psi_{k,s}(e_i\wedge e_j) =\begin{cases}
\displaystyle{(-1)^{k-i} \binom{j-k-1}{k-i}(\ad e_1)^{(j-k-1)-(k-i)}e_s} &
\text{if $2\le i\le k<j\le n$ },\\[0.2cm]
\displaystyle{0} & \text{otherwise}.
\end{cases}
\end{equation}
The $\psi_{k,s}$ defined by \eqref{psiks} in fact lie
in $Z^2(L,L)$. The following result is due to Vergne \cite{V}, where
$K=\C$:
\begin{lem}\label{vergne}
Any $n$--dimensional filiform Lie algebra is isomorphic to a
Lie algebra $L_{\psi}$ with basis $(e_1,\ldots , e_n)$ whose Lie brackets
are given by
\begin{equation}\label{apt}
[e_i,e_j]=[e_i,e_j]_L + \psi (e_i\wedge e_j),\; 1\le i,j \le n.
\end{equation}
Here $\psi$ is a $2$--cocycle which can be expressed by
\begin{equation*}
\psi=\sum_{(k,s)\in \CI_n}\al_{k,s} \psi_{k,s}
\end{equation*}
with $\al_{k,s}\in K$. The $2$--cocycle $\psi\in Z^2(L,L)$ defines an
infinitesimal deformation $L_{\psi}$ of $L$.
\end{lem}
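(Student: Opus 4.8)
The plan is to follow Vergne's route: build an \emph{adapted basis} in which the bracket of $\Lg$ takes the form \eqref{apt}, and then analyse the cochain $\psi$ measuring the deviation of $[\cdot,\cdot]_\Lg$ from the standard filiform bracket $[\cdot,\cdot]_L$ of $L=L(n)$.

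\textbf{Step 1: the adapted basis.} First I choose $e_1\in\Lg$ as generic as possible. For $0\le k\le n-3$ the operator $\ad x$ maps $\Lg^k$ into $\Lg^{k+1}$, hence induces a linear map $\Lg^k/\Lg^{k+1}\to\Lg^{k+1}/\Lg^{k+2}$; the $x$ for which one of these induced maps vanishes form a finite union of linear subspaces, each of them \emph{proper} because $\Lg^k\neq\Lg^{k+1}$ for $k\le n-2$ (this is where the filiform hypothesis is used). If $K$ is infinite one picks $e_1$ outside all of them; then $e_1\notin\Lg^1$, there is a $v$ with $[e_1,v]\notin\Lg^2$, and descending through $\Lg\supsetneq\Lg^1\supsetneq\cdots\supsetneq\Lg^{n-2}\supsetneq 0$ one gets $(\ad e_1)^m v\in\Lg^m\setminus\Lg^{m+1}$ for $m\le n-2$, so $(\ad e_1)^{n-2}\neq 0$; also $(\ad e_1)^{n-1}=0$ since $\Lg^{n-1}=0$. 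Taking $e_2$ with $(\ad e_1)^{n-2}e_2\neq 0$ and setting $e_i:=(\ad e_1)^{i-2}e_2$ for $2\le i\le n$, one checks (by applying powers of $\ad e_1$, and using $(\ad e_1)^{n-2}(\Lg^1)\subseteq\Lg^{n-1}=0$) that $(e_1,\dots,e_n)$ is a basis with $[e_1,e_i]=e_{i+1}$ for $2\le i\le n-1$ and $[e_1,e_n]=0$. Defining $\psi(e_i\wedge e_j):=[e_i,e_j]_\Lg-[e_i,e_j]_L$ then forces $\psi(e_1\wedge e_i)=0$ for all $i$; and from $\Lg^m=\s(e_{m+2},\dots,e_n)$ ($m\ge1$) one gets, for $i,j\ge2$, that $\psi(e_i\wedge e_j)=[e_i,e_j]_\Lg\in\Lg^{i+j-3}=\s(e_{i+j-1},\dots,e_n)$.

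\textbf{Step 2: $\psi$ is a cocycle, and a combination of the $\psi_{k,s}$.} I split the Jacobi identity of $\Lg$ according to whether a triple of basis vectors contains $e_1$. In $L=L(n)$ the operators $\ad e_i$ with $i\ge2$ vanish, and $[e_i,e_j]_L=0$ for $i,j\ge2$; feeding this into the Chevalley--Eilenberg coboundary formula shows $d\psi(e_i\wedge e_j\wedge e_k)=0$ identically for $i,j,k\ge2$, while for a triple $(e_1,e_i,e_j)$ with $2\le i,j\le n$ the Jacobi identity of $\Lg$ unwinds to exactly
\[
[e_1,\psi(e_i\wedge e_j)]=\psi([e_1,e_i]_L\wedge e_j)+\psi(e_i\wedge[e_1,e_j]_L),
\]
i.e.\ to $d\psi(e_1\wedge e_i\wedge e_j)=0$ -- the condition recorded before \eqref{psiks}. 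Hence $\psi\in Z^2(L,L)$, so $L_\psi=\Lg$ is an infinitesimal deformation of $L$ (and $[\psi,\psi]=0$ holds automatically, $L_\psi$ being a Lie algebra). Reading the displayed relation as $\psi(e_i\wedge e_j)=[e_1,\psi(e_i\wedge e_{j-1})]-\psi(e_{i+1}\wedge e_{j-1})$ (with $e_m:=0$ for $m>n$) and inducting on $j-i$ shows that \emph{any} cocycle vanishing on $e_1\wedge(\cdot)$ is determined by the vectors $v_k:=\psi(e_k\wedge e_{k+1})$, $2\le k\le n-1$. Since $\psi_{k,s}(e_{k'}\wedge e_{k'+1})=\delta_{kk'}e_s$ and each $\psi_{k,s}$ is such a cocycle, $\psi-\sum_{k,s}(v_k)_s\psi_{k,s}$ has all its $v_k$ equal to $0$, hence vanishes; so $\psi=\sum_{k,s}(v_k)_s\psi_{k,s}$. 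By Step 1, $v_k\in\s(e_{2k},\dots,e_n)$, so only pairs with $2k\le s\le n$ occur; equivalently, grading $L$ by $\deg e_i=i$ so that $\psi_{k,s}$ is homogeneous of weight $s-2k-1$, every homogeneous component of $\psi$ has weight $\ge-1$, and its part of weight $\ge 0$ is precisely $\sum_{(k,s)\in\CI_n^0}\al_{k,s}\psi_{k,s}$ (the inequalities $2k+1\le s\le n$ forcing $k\le[n/2]$).

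\textbf{Step 3 (the main obstacle): the weight $-1$ part.} What remains is to arrange, after a further change of adapted basis, that the terms $\psi_{k,2k}$ with $2k<n$ drop out of $\psi$, leaving -- when $n$ is even -- only a multiple of $\psi_{n/2,n}$. This step is genuinely delicate and is the one I expect to cost the most work: the $\psi_{k,2k}$ with $(k,2k)\notin\CI_n$ are \emph{not} cohomologically trivial within the cochains vanishing on $e_1\wedge(\cdot)$ (for instance $\psi_{3,6}$ is not a coboundary when $n=7$), so they cannot simply be subtracted off. One must instead use that $L_\psi$ is an honest Lie algebra -- the quadratic identity $[\psi,\psi]=0$ -- together with the residual freedom in the choice of $e_1$ and $e_2$, to trade the weight-$(-1)$ terms against terms of weight $\ge0$ and eliminate all of them except $\psi_{n/2,n}$ in the even case, while preserving $\psi(e_1\wedge(\cdot))=0$ throughout. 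This normalisation is the core of Vergne's argument \cite{V}. The only additional ingredient needed to run the whole proof over an arbitrary field $K$ is the existence of a generic $e_1$ in Step 1, which is available as soon as $K$ is infinite.
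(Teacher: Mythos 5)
The paper offers no proof of this lemma---it is quoted from Vergne \cite{V}---so there is nothing internal to compare against; judged on its own, your argument is correct and complete through Step~2 but has a genuine gap at Step~3. Steps~1 and~2 are the standard route and are sound: a generic $e_1$ (available over an infinite field, which is the setting here) gives a basis with $[e_1,e_i]=e_{i+1}$ and $\psi(e_1\wedge e_i)=0$; the Jacobi identity on triples containing $e_1$ is exactly the cocycle condition preceding \eqref{psiks}; and the recursion determines $\psi$ from the vectors $v_k=\psi(e_k\wedge e_{k+1})$. But the filtration argument only yields $v_k\in\s\{e_{2k},\dots,e_n\}$, i.e.\ $\psi=\sum\al_{k,s}\psi_{k,s}$ with $s\ge 2k$, whereas the lemma asserts $s\ge 2k+1$ for every pair except the single exceptional $(n/2,n)$ when $n$ is even. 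The gap between ``$s\ge 2k$'' and ``$(k,s)\in\CI_n$'' is precisely the content of the theorem.

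Your Step~3 names this obstacle accurately---the terms $\psi_{k,2k}$ with $2k<n$ are not coboundaries within the cochains annihilating $e_1\wedge(\cdot)$, so they cannot be subtracted off cohomologically---but it only announces the normalisation rather than performing it. Eliminating the weight-$(-1)$ part amounts to classifying the naturally graded filiform Lie algebras (the associated graded algebra of $\Lg$ must be $L(n)$ or, for $n$ even, the algebra with the extra brackets $[e_k,e_{n+1-k}]=(-1)^k\al\, e_n$), and one must then lift that graded normalisation to an adapted basis of $\Lg$ itself while preserving $\psi(e_1\wedge\cdot)=0$ and re-expanding the higher-weight terms in the new basis. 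None of this is routine bookkeeping: it uses the quadratic condition coming from the Jacobi identity of $L_\psi$ and a further, carefully chosen change of the pair $(e_1,e_2)$. As written, your proposal establishes a weaker statement (expansion over pairs with $s\ge 2k$) and defers the essential step to \cite{V}; to count as a proof of the lemma you would need to carry out the graded classification and the lifting explicitly.
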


\begin{defi}
A basis $(e_1,\ldots ,e_n)$ of an $n$--dimensional filiform Lie algebra
is called {\it adapted}, if the brackets relative to this basis are given
by $(\ref{apt})$ with a $2$--cocycle
$\psi=\sum_{(k,s)\in \CI_n}\al_{k,s} \psi_{k,s}$.
\end{defi}

Using Lemma $\ref{vergne}$ and \eqref{psiks} we obtain:

\begin{lem}
All brackets of an $n$--dimensional filiform Lie algebra in an adapted
basis $(e_1,\ldots ,e_n)$ are determined by the brackets
\begin{align*}
[e_1,e_i] & =e_{i+1}, \quad i=2,\dots ,n-1 \\
[e_k,e_{k+1}] & = \al_{k,2k+1}e_{2k+1}+\ldots  + \al_{k,n}e_n,\quad
2\le k \le
[(n-1)/2] \\
[e_{\frac{n}{2}},e_{\frac{n+2}{2}}] & = \al_{\frac{n}{2},n}e_n, \quad
\text{ if }n \equiv 0 (2)
\end{align*}
\end{lem}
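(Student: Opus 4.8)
The plan is to unwind the description of an adapted basis provided by Lemma~\ref{vergne} and to evaluate the cocycles $\psi_{k,s}$ on the relevant pairs of basis vectors using the closed formula \eqref{psiks}; there is really nothing beyond careful index bookkeeping. So fix an adapted basis $(e_1,\ldots,e_n)$. By Lemma~\ref{vergne} and \eqref{apt},
\[
[e_i,e_j]=[e_i,e_j]_L+\sum_{(k,s)\in\CI_n}\al_{k,s}\,\psi_{k,s}(e_i\wedge e_j),\qquad 1\le i,j\le n,
\]
where the only nonzero brackets of $L=L(n)$ are $[e_1,e_i]_L=e_{i+1}$ for $2\le i\le n-1$. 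For the brackets involving $e_1$ I would simply invoke the defining relation $\psi_{k,s}(e_1\wedge e_i)=0$ valid for all $(k,s)$; this gives $[e_1,e_i]=e_{i+1}$ for $i=2,\ldots,n-1$ (and $[e_1,e_n]=0$), which is the first displayed line.

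Next, for $2\le i<j\le n$ one has $[e_i,e_j]_L=0$, so $[e_i,e_j]=\sum_{(k,s)}\al_{k,s}\,\psi_{k,s}(e_i\wedge e_j)$ with each summand given termwise by \eqref{psiks}. Specializing to $j=i+1=k+1$: for a term $\psi_{k',s}(e_k\wedge e_{k+1})$ to be nonzero the condition $2\le i\le k'<j\le n$ of \eqref{psiks} forces both $k'\ge k$ and $k'<k+1$, hence $k'=k$; and then \eqref{psiks} evaluates to $(-1)^{0}\binom{0}{0}(\ad e_1)^{0}e_s=e_s$. Thus $[e_k,e_{k+1}]=\sum_{s:\,(k,s)\in\CI_n}\al_{k,s}e_s$, and it remains only to identify which $s$ occur. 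For $2\le k\le[(n-1)/2]$ we have $2k+1\le n$, and the pairs in $\CI_n$ with first coordinate $k$ are exactly $s=2k+1,\ldots,n$ (the exceptional even-$n$ pair $(\tfrac n2,n)$ has first coordinate $>[(n-1)/2]$, hence does not interfere); this yields the second line. For $n$ even and $k=\tfrac n2$, the set $\CI_n^0$ contributes no such pair since $2k+1=n+1>n$, and the only pair with first coordinate $\tfrac n2$ is the exceptional $(\tfrac n2,n)$, giving the third line.

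Finally, for a general pair $2\le i<j\le n$ with $j>i+1$, the bracket $[e_i,e_j]=\sum_{(k,s)}\al_{k,s}\,\psi_{k,s}(e_i\wedge e_j)$ is again given explicitly by \eqref{psiks}, in which $\ad e_1$ acts by $e_m\mapsto e_{m+1}$; since the previous steps recover every coefficient $\al_{k,s}$ from the listed brackets $[e_k,e_{k+1}]$, all remaining brackets are determined by the listed ones. The only genuine obstacle is the index bookkeeping: verifying that $\psi_{k',s}(e_k\wedge e_{k+1})$ vanishes unless $k'=k$ (this rests on the \emph{otherwise} clause of \eqref{psiks}) and that, for each admissible $k$, precisely the values $s=2k+1,\ldots,n$ arise from $\CI_n$ (respectively only $s=n$ when $k=\tfrac n2$) — which is where one must keep track of the parity of $n$ and of the exceptional pair.
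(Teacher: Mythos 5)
Your proposal is correct and follows exactly the route the paper intends: the lemma is stated there as an immediate consequence of Lemma~\ref{vergne} and formula~\eqref{psiks}, and your index bookkeeping (forcing $k'=k$ when evaluating on $e_k\wedge e_{k+1}$, and sorting the admissible $s$ according to the parity of $n$ and the exceptional pair $(\tfrac n2,n)$) is precisely the verification left to the reader.
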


\begin{lem}
The brackets of an $n$--dimensional filiform Lie algebra in an adapted
basis are given by:
\begin{align}\label{lie}
[e_1,e_i] & =e_{i+1}, \quad i=2,\dots ,n-1 \\
[e_i,e_j] & =\sum_{r=1}^n\biggl(\;\sum_{\ell=0}^{[(j-i-1)/2]} (-1)^\ell
{ j-i-\ell-1 \choose \ell}\al_{i+\ell,\, r-j+i+2\ell+1}\biggr)e_r,
 \quad 2 \le i<j \le n.
\end{align}
where the constants $\al_{k,s}$ are zero for all pairs $(k,s)$ not in $\CI_n$.
\end{lem}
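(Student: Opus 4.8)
The plan is to derive the displayed formula \eqref{lie} from the previous lemma by explicitly summing over the pairs $(k,s)\in\CI_n$ that occur in the adapted cocycle $\psi=\sum_{(k,s)}\al_{k,s}\psi_{k,s}$ and then reading off the coefficient of each basis vector $e_r$ in the bracket $[e_i,e_j]$. The first relation $[e_1,e_i]=e_{i+1}$ is immediate from \eqref{apt} together with $\psi_{k,s}(e_1\wedge e_i)=0$, so the content lies entirely in the second relation, where $2\le i<j\le n$. Here $[e_i,e_j]_L=0$ (all $L$-brackets not involving $e_1$ vanish in the standard graded filiform), so $[e_i,e_j]=\psi(e_i\wedge e_j)=\sum_{(k,s)\in\CI_n}\al_{k,s}\,\psi_{k,s}(e_i\wedge e_j)$, and I only need to evaluate each $\psi_{k,s}(e_i\wedge e_j)$ via \eqref{psiks}.

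The main step is the following substitution. By \eqref{psiks}, the term $\psi_{k,s}(e_i\wedge e_j)$ is nonzero only when $2\le i\le k<j\le n$, in which case it equals $(-1)^{k-i}\binom{j-k-1}{k-i}(\ad e_1)^{(j-k-1)-(k-i)}e_s$. Now $(\ad e_1)^{m}e_s=e_{s+m}$ as long as $s+m\le n$ (and $0$ otherwise), so writing $\ell=k-i$ (which runs over $0\le\ell\le[(j-i-1)/2]$, the upper bound coming from the constraint $i\le k<j$ combined with $2k+1\le s$, i.e. $s\ge 2k+1=2i+2\ell+1$, which forces $j-k-1\ge k-i$, that is $\ell\le (j-i-1)/2$), one gets $k=i+\ell$ and the power of $\ad e_1$ equals $(j-i-\ell-1)-\ell=j-i-2\ell-1$. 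Hence $\psi_{i+\ell,s}(e_i\wedge e_j)=(-1)^{\ell}\binom{j-i-\ell-1}{\ell}\,e_{s+j-i-2\ell-1}$. Setting $r=s+j-i-2\ell-1$, i.e. $s=r-j+i+2\ell+1$, and summing $\al_{i+\ell,\,r-j+i+2\ell+1}$ over all admissible $r$ and $\ell$ — with the convention that $\al_{k,s}=0$ whenever $(k,s)\notin\CI_n$, which automatically kills the out-of-range and boundary terms — produces exactly
\[
[e_i,e_j]=\sum_{r=1}^{n}\Biggl(\sum_{\ell=0}^{[(j-i-1)/2]}(-1)^{\ell}\binom{j-i-\ell-1}{\ell}\al_{i+\ell,\,r-j+i+2\ell+1}\Biggr)e_r,
\]
which is \eqref{lie}.

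I expect the only real obstacle to be bookkeeping at the boundary: one must check that the convention $\al_{k,s}=0$ for $(k,s)\notin\CI_n$ correctly absorbs (a) the cases where $s+j-i-2\ell-1>n$, so that $(\ad e_1)^{j-i-2\ell-1}e_s=0$ anyway, (b) the even-dimensional extra index $(\tfrac n2,n)\in\CI_n\setminus\CI_n^0$, which must be shown to contribute consistently rather than being an exception, and (c) the upper summation limit $[(j-i-1)/2]$ versus the defining inequality $2k+1\le s\le n$ in $\CI_n^0$. Each of these is a short inequality check, and once they are in place the formula follows by simply collecting coefficients; no deeper input is needed beyond Lemma~\ref{vergne} and \eqref{psiks}.
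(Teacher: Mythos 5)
Your derivation is correct and is precisely the computation the paper intends (the lemma is stated there without proof, as a direct consequence of Lemma~\ref{vergne} and formula~\eqref{psiks}): substitute $k=i+\ell$, use $(\ad e_1)^m e_s=e_{s+m}$, and collect the coefficient of each $e_r$. Two minor bookkeeping corrections: the truncation at $\ell=[(j-i-1)/2]$ comes not from $s\ge 2k+1$ as you assert, but from the vanishing of $\binom{j-i-\ell-1}{\ell}$ (equivalently, nonnegativity of the exponent of $\ad e_1$ in \eqref{psiks}) once $2\ell>j-i-1$; and the terms with $s+j-i-2\ell-1>n$ are eliminated because the outer sum ranges only over $r\le n$, not by the convention $\al_{k,s}=0$ for $(k,s)\notin\CI_n$.
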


Note that an adapted basis for a filiform Lie algebra is not unique.
Nevertheless we can associate coefficients $\{\al_{k,s} \mid (k,s) \in \CI_n\}$
to a filiform Lie algebra with respect to an adapted basis.
We have $(n-3)^2/4$ parameters if $n$ is odd, and $(n^2-6n+12)/4$ if $n$ is even. 
The Jacobi identity defines certain equations with polynomials in $K[\al_{k,s}]$.
If $n<8$, there are no equations, i.e., the Jacobi identity is
satisfied automatically.
In general, with respect to an adapted basis, the polynomial equations are
much simpler than usual. As an example, for filiform Lie algebras of dimension
$9$, the Jacobi identity with respect to $\{\al_{k,s} \mid (k,s) \in \CI_9\}$ is given by
the single equation $\al_{4,9}(2\al_{2,5}+\al_{3,7})-3\al_{3,7}^2=0.$

\begin{defi}
Let $V$ be a vector space of dimension $n$ over an algebraically closed
field $K$ of characteristic zero, with fixed
basis $(e_1,\ldots , e_n)$.
A Lie algebra structure on $V$ determines a multiplication table relative
to the basis. If
\begin{equation*}
[e_i,e_j] = \sum_{k=1}^n c_{ij}^k e_k
\end{equation*}
then the point $(c_{ij}^k) \in K^{n^3}$ is called a {\it Lie algebra law}.
\end{defi}

The constants $c_{ij}^k$ are subject to algebraic equations given by
the skew-symmetry and the Jacobi identity of the Lie bracket.
They define a certain Zariski-closed set in $n^3$--dimensional affine
space with coordinates $c_{ij}^k,\; 1\le i,j,k \le n$.
The set of all Lie algebra laws is often called the {\it variety of
Lie algebra laws} and is denoted by $\CL_n(K)$.

\begin{defi}
Denote by $\CF_n(K)$ the Zariski-open subset of $\CL_n(K)$ defining 
$n$--di\-men\-sio\-nal 
filiform nilpotent Lie algebras over $K$. Let $\CA_n(K)$ denote the subset of
$\CF_n(K)$ consisting of elements which are the structure constants of a
filiform Lie algebra with respect to an adapted basis.
If $\la\in\CF_n(K)$ then we denote the corresponding Lie algebra by
$\Lg_{\la}$. Denote the class of $n$--dimensional filiform Lie algebras
over $K$ by $\LF_n(K)$.
\end{defi}

Lemma $\ref{vergne}$ implies:
\begin{lem}\label{pro}
Let $\Lg\in \LF_n(K)$. Then there exists a basis $(e_1,\ldots ,e_n)$ such
that the corresponding Lie algebra law belongs to $\CA_n(K)$.
\end{lem}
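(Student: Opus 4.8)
The plan is to obtain the statement immediately from Vergne's normal form (Lemma \ref{vergne}) by transporting the bracket along the isomorphism it produces; there is essentially nothing to prove beyond unwinding the definitions.

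Let $\Lg\in\LF_n(K)$ be an abstract $n$-dimensional filiform Lie algebra. First I would apply Lemma \ref{vergne} to obtain a Lie algebra isomorphism $\phi\colon\Lg\to L_\psi$, where $L_\psi$ has basis $(e_1,\ldots,e_n)$ and brackets \eqref{apt} for some $2$-cocycle $\psi=\sum_{(k,s)\in\CI_n}\al_{k,s}\psi_{k,s}$. Setting $f_i:=\phi^{-1}(e_i)$ for $1\le i\le n$ yields a basis of $\Lg$, and since $\phi$ preserves brackets the structure constants $c_{ij}^k$ given by $[f_i,f_j]=\sum_k c_{ij}^k f_k$ coincide with those of $L_\psi$ with respect to $(e_1,\ldots,e_n)$. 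Hence it suffices to show that the law $\la=(c_{ij}^k)$ lies in $\CA_n(K)$.

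Second I would identify $\la$ with a point of $\CA_n(K)$. Expanding \eqref{apt} by means of \eqref{psiks} --- this is precisely the computation behind \eqref{lie} --- shows that the brackets of $L_\psi$ in the basis $(e_1,\ldots,e_n)$ are exactly those of \eqref{lie}, namely $[e_1,e_i]=e_{i+1}$ for $2\le i\le n-1$ together with formula \eqref{lie} for $[e_i,e_j]$, $2\le i<j\le n$, where $\al_{k,s}=0$ for $(k,s)\notin\CI_n$. By definition this is the shape of the structure constants of a filiform Lie algebra in an adapted basis, so $\la\in\CA_n(K)$ as soon as $\la\in\CF_n(K)$; and the latter holds because $\la$ is the law of $\Lg\cong L_\psi$, which is filiform by hypothesis. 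Therefore $(f_1,\ldots,f_n)$ is the required basis.

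I do not expect a genuine obstacle: the whole content is carried by Lemma \ref{vergne}, and the present lemma merely restates it in terms of the subset $\CA_n(K)\subseteq\CF_n(K)$ of laws in adapted form. The one point worth a line is the consistency check that the brackets above indeed define a filiform law, which is clear since the coefficient of $e_r$ in $[e_i,e_j]$ with $2\le i<j$ vanishes unless $r>j$; together with $[e_1,e_i]=e_{i+1}$ this gives $\Lg^k=\s(e_{k+2},\ldots,e_n)$ by induction, so the nilindex equals $n-1$ while $\dim\Lg=n$.
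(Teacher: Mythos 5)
Your proof is correct and follows the paper's route exactly: the paper offers no separate argument, presenting the lemma as an immediate consequence of Lemma \ref{vergne}, and your write-up simply makes explicit the transport of the adapted basis along the isomorphism to $L_\psi$ and the identification of the resulting law with a point of $\CA_n(K)$. The added consistency check that the law is filiform is a reasonable (and correct) bit of bookkeeping that the paper leaves implicit.
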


Investigating affine structures on filiform Lie algebras
it turns out that the following subclasses are of importance.
Let $\Lg$ be a filiform Lie algebra of dimension $n\ge 7$ and
$\Lg^1=[\Lg,\Lg],\, \Lg^k=[\Lg^{k-1},\Lg]$ for $k\ge 2$. The following
properties are isomorphism invariants of $\Lg$:
\begin{itemize}
\item[(a)] $\Lg$ contains a one-codimensional subspace $U \supseteq \Lg^1$
such that $[U,\Lg^1]\subseteq \Lg^4$.
\item[(b)] $\Lg$ contains {\it no} one-codimensional subspace $U \supseteq \Lg^1$
such that $[U,\Lg^1]\subseteq \Lg^4$.
\item[(c)] $\Lg^{\frac{n-4}{2}}$
is abelian, where $n$ is even.
\item[(d)] $[\Lg^1,\Lg^1]\subseteq \Lg^6$.
\end{itemize}     

These properties can more naturally be formulated in terms of structure 
constants of an adapted basis. 

\begin{defi}
Let $\LA_n^1(K)$ denote the class of filiform Lie algebras of dimension
$n\ge 12$ satisfying properties (b),(c),(d). 
Let $\LA_n^2(K)$ denote the class of filiform Lie algebras of dimension
$n\ge 12$ satisfying properties (b),(c), but not property (d).
\end{defi}

These two classes are disjoint, in the sense that a
Lie algebra from the first class cannot be isomorphic to one of the
second class.
The algebras of $\LA_n^1(K)$ and $\LA_n^2(K)$ have some remarkable
properties concerning central extensions and affine structures.

\section{Affine cohomology classes}

In this section we will prove that the existence of
affine cohomology classes in $H^2(\Lg,K)$ for filiform Lie algebras $\Lg$
implies the existence of a canonical affine structure on $\Lg$.
It is then very interesting to study filiform Lie algebras with
minimal second Betti number $b_2(\Lg)=2$.
Such algebras do not admit an affine cohomology
class and hence no affine structure of canonical type. However,
in order to ensure that there exists no other affine structures
one needs additional conditions.\\[0.2cm]
Let us quickly review Lie algebra cohomology, for details see \cite{KNA}.
Denote by $\Lg$ a Lie algebra over $K$.
Denote by $M$ an $\Lg$--module with action $\Lg \times M \rightarrow M$,
$(x,m)\mapsto x \pkt m$.
The space of $p$--cochains is defined by
\begin{equation*}
C^p(\Lg,M)=\begin{cases}
\Hom_K(\Lambda^p\Lg,M) & \text{if $p\ge0$},\\
0 & \text{if $p<0$}.
\end{cases}
\end{equation*}
The {\it coboundary operators} $d_p: C^p(\Lg,M) \rightarrow C^{p+1}(\Lg,M)$
are defined by
\begin{equation*}\label{xx}
\begin{split}
(d_p\om)(x_1\wedge \dots \wedge x_{p+1}) & = \sum_{1\le r<s\le p+1}
(-1)^{r+s}\om ([x_r,x_s]\wedge x_1\wedge \dots \wedge \widehat{x_r}\wedge
\dots \wedge \widehat{x_s}\wedge \dots \wedge x_{p+1})\\
& \quad + \; \sum_{t=1}^{p+1} (-1)^{t+1}x_t \pkt \om (x_1\wedge \dots \wedge
\widehat{x_t}\dots \wedge x_{p+1}),
\end{split}
\end{equation*}
for $p\ge 0$ and $\om \in C^p(\Lg,M)$. If $p<0$ then we set $d_p=0$.
A standard computation shows $d_p\circ d_{p-1}=0$, hence the definition
$$ H^p(\Lg,M)=\ker d_p /\im d_{p-1}=Z^p(\Lg,M)/B^p(\Lg,M) $$
makes sense. This space is called the {\it $p^{th}$ cohomology group}
of $\Lg$ with coefficients in the $\Lg$--module $M$. The elements from
$Z^p(\Lg,M)$ are called {\it $p$--cocycles}, and from $B^p(\Lg,M)$
{\it $p$--coboundaries}.
The sequence
\begin{equation*}
0 \rightarrow C^0(\Lg,M) \xrightarrow{d_0} C^1(\Lg,M) \xrightarrow{d_1}
C^2(\Lg,M) \rightarrow \cdots
\end{equation*}
yields a cochain complex, which is called the {\it standard} cochain complex
and is denoted by $\{ C^{\pkt}(\Lg,M),d \}$.\\
The space of $2$--cocycles and $2$--coboundaries is given explicitly as follows:

\begin{equation*}
\begin{split}
Z^2(\Lg,M) & =\{ \om \in \Hom(\Lambda^2 \Lg,M) \mid x_1\pkt \om
(x_2\wedge x_3)-
x_2\pkt \om (x_1\wedge x_3)+x_3\pkt \om (x_1\wedge x_2)\\
 & \hskip0.25 cm -\om ([x_1,x_2]\wedge x_3) + \om ([x_1,x_3]\wedge x_2) -
\om ([x_2,x_3]\wedge x_1)=0 \}\\
B^2(\Lg,M) & =\{ \om \in \Hom(\Lambda^2\Lg,M) \mid \om (x_1\wedge x_2)=x_1\pkt
f(x_2)-x_2\pkt f(x_1)-f([x_1,x_2])\\
 & \qquad \mbox{for some } f\in \Hom (\Lg,M)\}
\end{split}
\end{equation*}

There are important special cases of Lie algebra cohomology.
If $M=K$ denotes the $1$--dimensional trivial module, i.e., $x\pkt m=0$
for all $x\in\Lg$, then the numbers $b_p(\Lg)=\dim H^p(\Lg,K)$ are of special
interest. The number $b_p(\Lg)$ is called the {\it $p^{th}$ Betti number}.
There are many questions regarding the Betti numbers of nilpotent Lie algebras.
Among other things one would like to know good upper and lower bounds for each $b_p(\Lg)$.
It is still an open conjecture whether or not the following is true for
nilpotent Lie algebras:
$$b_2(\Lg) > \frac{b_1(\Lg)^2}{4}$$
This is called the {\it $b_2$--conjecture}. It is
proved for algebras with $b_1(\Lg)\le 3$, for $2$--step nilpotent Lie algebras
and for nilpotent Lie algebras $\Lg$ with
$ \dim \Lg /\Lz (\Lg) \le 7$. For details see \cite{CJP}.
Another conjecture is the {\it toral rank conjecture} for nilpotent
Lie algebras, stating
\begin{equation*}
\sum_{p=0}^n b_p(\Lg)\ge 2^{\dim \Lz(\Lg)}
\end{equation*}
where $\Lz(\Lg)$ denotes the center of $\Lg$. That is
also known only in few cases \cite{CJ}. For filiform
Lie algebras however both conjectures are clear. Nevertheless the
explicit determination of Betti numbers of filiform algebras 
leads to formidable combinatorial problems, see \cite{AS}. \\[0.3cm]
We come now to the definition of an affine $2$--cocycle:

\begin{defi}
Let $\Lg\in \LF_n(K)$. A $2$--cocycle $\om\in Z^2(\Lg,K)$
is called {\it affine}, if $\om : \Lg \wedge \Lg \rightarrow K$ is nonzero
on $\Lz (\Lg) \wedge \Lg$. A class $[\om]\in H^2(\Lg,K)$ is called
affine if every representative is affine.
\end{defi}
\begin{lem}
Let $\Lg\in \LF_n(K)$ and $\om\in Z^2(\Lg,K)$ be an affine $2$--cocycle.
Then its cohomology class $[\om]\in H^2(\Lg,K)$ is affine and nonzero.
\end{lem}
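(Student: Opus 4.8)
The statement has two assertions: that an affine $2$-cocycle $\om$ has nonzero class, and that every representative of $[\om]$ is then affine (so the class is affine in the sense of the definition). The plan is to reduce both to a single observation about coboundaries in $B^2(\Lg,K)$.

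First I would analyse $B^2(\Lg,K)$ restricted to $\Lz(\Lg)\wedge\Lg$. Since $M=K$ is the trivial module, a coboundary has the form $\om_f(x\wedge y)=-f([x,y])$ for some $f\in\Hom(\Lg,K)$. If $z$ spans the (one-dimensional, by the Remark in Section 2) center $\Lz(\Lg)$, then $[z,y]=0$ for all $y\in\Lg$, hence $\om_f(z\wedge y)=-f([z,y])=0$. Thus every $2$-coboundary vanishes identically on $\Lz(\Lg)\wedge\Lg$. This is the key step, and it is essentially immediate; the only thing to invoke is that the center of a filiform algebra is one-dimensional, but in fact one does not even need that — the argument works verbatim for any nonzero central element.

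From this the lemma follows quickly. For the first assertion: if $[\om]=0$ then $\om\in B^2(\Lg,K)$, so by the above $\om$ vanishes on $\Lz(\Lg)\wedge\Lg$, contradicting that $\om$ is affine; hence $[\om]\neq 0$. For the second assertion: any other representative is $\om'=\om+\om_f$ with $\om_f\in B^2(\Lg,K)$, and for $z\in\Lz(\Lg)\setminus\{0\}$ and suitable $y\in\Lg$ we have $\om'(z\wedge y)=\om(z\wedge y)+0=\om(z\wedge y)$, which is nonzero for some $y$ because $\om$ is affine; so $\om'$ is affine too. Therefore every representative of $[\om]$ is affine, i.e.\ $[\om]\in H^2(\Lg,K)$ is an affine class, and it is nonzero.

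I do not anticipate any real obstacle: the content is the trivial-module identity $\om_f(x\wedge y)=-f([x,y])$ together with centrality. The only point to be slightly careful about is the logical packaging — the definition demands that \emph{every} representative be affine, so one must check that the affineness of $\om$ propagates to $\om+\om_f$, which is exactly what the vanishing of $\om_f$ on $\Lz(\Lg)\wedge\Lg$ gives. No computation with the explicit brackets \eqref{lie} or the cocycles $\psi_{k,s}$ is needed.
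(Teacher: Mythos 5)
Your proof is correct and follows essentially the same route as the paper: the key observation in both is that, since $K$ is a trivial module, every $2$-coboundary is of the form $x\wedge y\mapsto -f([x,y])$ and hence vanishes on $\Lz(\Lg)\wedge\Lg$, which simultaneously shows $[\om]\neq 0$ and that affineness passes to every representative. Your write-up is merely a bit more explicit than the paper's about the second point.
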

\begin{proof}
If $\Lz(\Lg)=\s \{z\}$, then $\om$ is affine iff $\om(z\wedge y)\ne 0$ for
some $y\in \Lg$. For $\xi \in B^2(\Lg,K)$ we have $\xi(z\wedge y)=f([z,y])=
f(0)=0$ for some linear form $f\in \Lg^*$. Hence $\om$ is not a
$2$--coboundary and $[\om]$ is affine.
\end{proof}

Since the elements of $H^2(\Lg,K)$ classify the equivalence
classes of central extensions of $\Lg$ by $K$ we obtain the following
characterization:

\begin{prop} \label{h2}
A Lie algebra $\Lg\in\LF_n(K)$ has an extension
\begin{equation} \label{zen}
0 \rightarrow \Lz (\Lh) \xrightarrow{\iota} \Lh \xrightarrow{\pi}
\Lg \rightarrow 0
\end{equation}
with $\Lh \in \LF_{n+1}(K)$ if and only if there exists an affine
$[\om]\in H^2(\Lg,K)$.
\end{prop}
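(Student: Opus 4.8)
The plan is to use the standard correspondence between central extensions and $H^2(\Lg,K)$, together with the filiformity analysis developed in Section~2. Recall that for $[\om]\in H^2(\Lg,K)$ one forms the extension $\Lh_\om = \Lg\oplus Kz$ with bracket $[x,y]_{\Lh}=[x,y]_{\Lg}+\om(x,y)z$ and $z$ central, and that $\Lh_\om\cong\Lh_{\om'}$ (as extensions) iff $[\om]=[\om']$. The content of the proposition is that one can choose this extension to be filiform of dimension $n+1$ precisely when $[\om]$ is affine.

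For the ``if'' direction, suppose $[\om]\in H^2(\Lg,K)$ is affine, i.e.\ every representative is nonzero on $\Lz(\Lg)\wedge\Lg$. Form $\Lh=\Lh_\om$. First I would check $\Lh$ is nilpotent: since $\om$ takes values in the central line $Kz$, the lower central series of $\Lh$ satisfies $\Lh^k \subseteq \Lg^k\oplus Kz$, so $\Lh$ is nilpotent of dimension $n+1$. To see that $\Lh$ is filiform (nilindex $n$), I would show $\Lh^{n-1}\neq 0$. Writing $\Lz(\Lg)=\s\{w\}$ with $w=e_n$ in an adapted basis, affineness gives some $y$ with $\om(w\wedge y)\ne 0$; since $w\in\Lg^{n-2}$ for a filiform $\Lg$, the relation $[w,y]_{\Lh}=\om(w,y)z\ne 0$ forces $z\in\Lh^{n-1}$, hence $\Lh^{n-1}\ne0$ and the nilindex of $\Lh$ is exactly $(n+1)-1=n$, so $\Lh\in\LF_{n+1}(K)$. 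Moreover $z$ is central in $\Lh$ and the center of a filiform algebra is one-dimensional, so $\Lz(\Lh)=Kz=\iota(\Lz(\Lh))$ and the sequence \eqref{zen} is as claimed.

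For the ``only if'' direction, suppose $0\to\Lz(\Lh)\xrightarrow{\iota}\Lh\xrightarrow{\pi}\Lg\to0$ with $\Lh\in\LF_{n+1}(K)$. Since $\Lh$ is filiform, $\Lz(\Lh)$ is one-dimensional, say $\Lz(\Lh)=Kz$; choosing a vector-space splitting $\sigma:\Lg\to\Lh$ of $\pi$ produces the cocycle $\om(x,y)z=[\sigma x,\sigma y]_{\Lh}-\sigma[x,y]_{\Lg}$, and the class $[\om]\in H^2(\Lg,K)$ is independent of $\sigma$. It remains to show $[\om]$ is affine, i.e.\ $\om$ is nonzero on $\Lz(\Lg)\wedge\Lg$; by the preceding lemma this then also shows $[\om]\ne0$. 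Let $w$ span $\Lz(\Lg)$. If $\om(w\wedge y)=0$ for all $y\in\Lg$, then $[\sigma w,\sigma y]_{\Lh}=\sigma[w,y]_{\Lg}=0$ for all $y$, so $\sigma w$ is central in $\Lh$, forcing $\sigma w\in Kz$; but $\pi(\sigma w)=w\ne0$ while $\pi(z)=0$, a contradiction. Hence $\om$ is nonzero on $\Lz(\Lg)\wedge\Lg$, so $[\om]$ is affine.

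The main obstacle is the ``if'' direction's claim that $\Lh_\om$ is genuinely filiform and not merely nilpotent of dimension $n+1$: one must verify that adding the central cocycle does not accidentally shrink the nilindex and that the resulting algebra has exactly the filiform type $\{2,1,\ldots,1\}$. This hinges on the position of $\Lz(\Lg)=\Lg^{n-2}$ in the lower central series of $\Lg$ and on tracking how $\om$, being nonzero on $\Lz(\Lg)$, pushes $z$ into $\Lh^{n-1}$; all the other verifications (nilpotency, one-dimensionality of centers, independence of splitting) are routine from the setup in Section~2 and the standard theory of central extensions.
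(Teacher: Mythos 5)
Your proof is correct and follows essentially the same route as the paper: both directions hinge on the standard bracket $[(a,x),(b,y)]=(\om(x\wedge y),[x,y]_{\Lg})$ on $K\oplus\Lg$ and on the equivalence between affineness of $\om$ and one-dimensionality of $\Lz(\Lh)$. The only difference is cosmetic: you verify filiformity of $\Lh$ by explicitly pushing $z$ into $\Lh^{n-1}$ using $\Lz(\Lg)=\Lg^{n-2}$, whereas the paper stops at $\dim\Lz(\Lh)=1$ and leaves that last (easy) deduction implicit.
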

\begin{proof}
The center $\Lz(\Lg)=\s \{z \}$ is one-dimensional.
Suppose that $\Lg$ has such an extension. Then $\Lz (\Lh)$ is a trivial
$\Lg$--module equal to $K$.
The extension determines a unique class $[\om] \in H^2(\Lg,K)$
and we may assume that the Lie bracket is given by
\begin{equation}
\label{h}
[(a,x),(b,y)]_{\Lh}:=(\om (x\wedge y),[x,y]_{\Lg})
\end{equation}
on the vector space $\Lh:=K\oplus \Lg$.
Suppose that $\om(z \wedge y)=0$ for all $y\in \Lg$. Then
$(a,0)$ and $(a,z)$ are contained in $\Lz (\Lh)$. This contradicts
$\Lz(\Lh)\cong K$. Hence $\om$ is affine.

Conversely an affine $[\om]\in H^2(\Lg,K)$ determines an extension
\begin{equation*}
0 \rightarrow K \xrightarrow{\iota} \Lh \xrightarrow{\pi}
\Lg \rightarrow 0
\end{equation*}
via the Lie bracket $(\ref{h})$
on $\Lh:=K\oplus \Lg$. Let $(a,x)\in \Lz (\Lh)$.
Then $x\in \Lz(\Lg)$ and it follows that $x$ is a multiple of $z$.
Since $\om (z,y) \ne 0$ for some $y\in \Lg$ it follows that
$(a,z)$ is not in $\Lz (\Lh)$. Hence $x=0$, $\Lz (\Lh)$ is the trivial
one-dimensional $\Lg$--module $K$ and $\Lh \in \LF_{n+1}(K)$.
\end{proof}

There is the following result on the connection between affine
cohomology classes and affine structures:

\begin{prop}
Let $\Lg\in\LF_n(K)$ and assume that there exists an affine cohomology class
$[\om]\in H^2(\Lg,K)$. Then $\Lg$ admits an affine structure.
\end{prop}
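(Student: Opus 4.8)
The plan is to produce an explicit affine structure on $\Lg$ from the datum of an affine cohomology class, using the central extension $\Lh \in \LF_{n+1}(K)$ provided by Proposition~\ref{h2}. First I would fix $\Lz(\Lg) = \s\{z\}$ and an affine representative $\om$ with $\om(z\wedge y_0)\ne 0$ for some $y_0\in\Lg$; rescaling, we may arrange $\om(z\wedge y_0)=1$ for a suitably chosen $y_0$. By Proposition~\ref{h2} we obtain $\Lh = K\oplus\Lg$ with bracket \eqref{h}, which is again filiform of dimension $n+1$; its one-dimensional center is $\Lz(\Lh)=\iota(K) = \s\{(1,0)\}$, and the element $(0,z)\in\Lh$ satisfies $[(0,z),(0,y)]_{\Lh} = (\om(z\wedge y),0)$, so $(0,z)$ maps onto a generator of $\Lh^{n-1}=\Lz(\Lh)$.

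The key step is the passage from a central extension with one-dimensional kernel $\K e_0$ (here $e_0 := (1,0)$) to a left-symmetric product on the quotient. The standard construction is as follows: choose a linear splitting $\sigma:\Lg\to\Lh$ of $\pi$, so every element of $\Lh$ is written uniquely as $t e_0 + \sigma(x)$; the bracket reads $[\sigma(x),\sigma(y)]_{\Lh} = \om(x\wedge y)e_0 + \sigma([x,y]_{\Lg})$. The filiform condition on $\Lh$ means $e_0$ spans $\Lh^{n-1}$, hence $e_0 = (\ad u)^{n-2} v$ for appropriate $u,v$; more to the point, since $\om(z\wedge y_0)\ne 0$, the module map $\ad: \Lh \to \End(K e_0)$ — equivalently the observation that $\Lh$ acts on the line $K e_0$ trivially because $e_0$ is central — is not what we want; instead the relevant mechanism is that $K e_0$ is a trivial $\Lg$-module and the extension being filiform forces $e_0$ to lie in the image of a product. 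Concretely, I would define $x\cdot y\in\Lg$ by requiring $\sigma(x)\cdot_{\Lh}\sigma(y) = (\text{something})e_0 + \sigma(x\cdot y)$ after first building an LSA on $\Lh$ and then projecting. The cleanest route: it is classical (and presumably recorded in \cite{BU2} or \cite{BU3}) that if $\Lh$ is a filiform Lie algebra of dimension $n+1$ which is a central extension of $\Lg$ by $K e_0$ with $e_0 = $ generator of $\Lh^{n-1}$, and if there is a derivation or a vector $w$ with $w \notin \Lh^1$ adjusting things, then setting $x\cdot y := $ the component of $[\sigma(x),\sigma(y)]$ in a complement, governed by $\om$, yields a left-symmetric product on $\Lg$; verifying left-symmetry \eqref{lsa1} reduces to the cocycle identity for $\om$ together with the Jacobi identity in $\Lg$.

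More precisely, the affine structure I expect to write down is the one of \emph{canonical type}: pick $y_0$ with $\om(z\wedge y_0)=1$, and define the product on $\Lg$ by a formula of the shape $x\cdot y := $ (an $\om$-twisted version of a partial integral of $\ad$) so that $x\cdot y - y\cdot x = [x,y]$ automatically and $(x,y,z)=(y,x,z)$ follows from $d\om = 0$. The verification splits into the bracket axiom \eqref{lsa2}, which will be immediate by antisymmetrizing the defining formula, and the left-symmetry \eqref{lsa1}, which I would check by expanding $(x,y,z)-(y,x,z)$ into a sum of terms involving $\om([x,y]\wedge z)$, $\om(x\wedge[y,z])$, $\om(y\wedge[x,z])$ and purely Lie-theoretic terms, then invoking the $2$-cocycle condition $\om([x,y]\wedge z)-\om([x,z]\wedge y)+\om([y,z]\wedge x)=0$ to collapse the expression to zero.

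The main obstacle I anticipate is pinning down the exact formula for $x\cdot y$: there is a genuine choice of how to "lift" the extension to an LSA, and the naive guess (use $e_0$ itself as the carrier of the product via $x\cdot y := \om(x\wedge y)\,y_0 + (\text{correction})$) requires a correction term built from the filiform structure — essentially one needs to know that $\Lg$, being $(n-1)$-step nilpotent, admits a "shift" operator inverse to $\ad e_1$ on the relevant subspace. Getting this correction to simultaneously respect \eqref{lsa2} and \eqref{lsa1} is the crux; everything else (the cocycle manipulation, the reduction to Jacobi) is routine bookkeeping. I would therefore organize the proof as: (1) invoke Proposition~\ref{h2} to get $\Lh\in\LF_{n+1}(K)$; (2) use the filiform structure of $\Lh$ to produce a distinguished element $y_0\in\Lg$ and a complement; (3) write the explicit canonical product; (4) verify \eqref{lsa2} by inspection and \eqref{lsa1} via the $2$-cocycle identity.
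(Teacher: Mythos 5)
Your skeleton is right --- pass to the filiform central extension $\Lh=K\oplus\Lg\in\LF_{n+1}(K)$ via Proposition~\ref{h2}, then manufacture the product on $\Lg$ from the bracket of $\Lh$ --- and your instinct that the missing ingredient is a ``shift operator inverse to $\ad e_1$'' is exactly the right one. But the proposal stops precisely at the point where the proof happens: you never fix a formula for $x\cdot y$, you float several mutually incompatible candidates (the guess $x\cdot y=\om(x\wedge y)\,y_0+\text{correction}$ cannot work as stated, since antisymmetrizing it produces a multiple of the single vector $y_0$ rather than the full bracket $[x,y]$), and you explicitly concede that making the construction respect \eqref{lsa1} and \eqref{lsa2} simultaneously is an unresolved ``crux.'' That is a genuine gap, not bookkeeping.

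The paper's construction (Theorem~\ref{ext}) is the following. Take an adapted basis $(f_1,\dots,f_{n+1})$ of $\Lh$ and set $e_i:=f_i \bmod \Lz(\Lh)$, $\Lh_3=\s\{f_3,\dots,f_{n+1}\}$, $\Lg_2=\s\{e_2,\dots,e_n\}$. The map $\phi:\Lg\ra\Lh_3$, $\phi(x)=[f_1,\ov{x}]_{\Lh}$ (with $\ov{x}$ any lift of $x$), restricts to a bijection $\Lg_2\ra\Lh_3$; call its inverse $\psi$. The product is
\begin{equation*}
x\pkt y:=\psi\bigl([\ov{x},\phi(y)]_{\Lh}\bigr)=\psi\bigl([\ov{x},[f_1,\ov{y}]]_{\Lh}\bigr),
\end{equation*}
which is well defined because $[\ov{x},[f_1,\ov{y}]]\in\Lh_3$. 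Condition \eqref{lsa2} then follows from one application of the Jacobi identity in $\Lh$ (which rewrites $[\ov{x},[f_1,\ov{y}]]-[\ov{y},[f_1,\ov{x}]]$ as $[f_1,[\ov{x},\ov{y}]]=\phi([x,y]_{\Lg})$), and \eqref{lsa1} from a second application together with the identity $[f_1,\ov{\psi(w)}]=w$ for $w\in\Lh_3$. Note that the left-symmetry is \emph{not} verified by expanding against the cocycle identity $d\om=0$ as you propose; that identity is already absorbed into the statement that $\Lh$ is a Lie algebra, and the entire verification runs through the Jacobi identity of $\Lh$. Your step (2) (``produce a distinguished element $y_0$ and a complement'') also points in a slightly wrong direction: the distinguished element is $f_1$, the first vector of an adapted basis of the \emph{extension}, not an element $y_0$ of $\Lg$ dual to $z$ under $\om$.
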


The proposition is a corollary of the following theorem:

\begin{thm} \label{ext}
Let $\Lg\in\LF_n(K)$ and suppose that $\Lg$
has an extension
\begin{equation*}
0 \rightarrow \La \xrightarrow{\iota} \Lh \xrightarrow{\pi}
\Lg \rightarrow 0
\end{equation*}
with $\iota(\La)=\Lz(\Lh)$. Then $\Lg$ admits an affine structure.
\end{thm}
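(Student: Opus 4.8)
The plan is to exploit the fact that $\Lh$ is a filiform Lie algebra of dimension $n+1$ sitting one step above $\Lg$, together with the hypothesis $\iota(\La)=\Lz(\Lh)$. First I would set $z$ to be a generator of the one-dimensional center $\Lz(\Lh)$, so that $\iota(\La)=Kz$, and choose an adapted basis $(\ov e_1,\dots,\ov e_{n+1})$ of $\Lh$ in the sense of Lemma~\ref{vergne} and Lemma~\ref{pro}; since $\Lz(\Lh)=K\ov e_{n+1}$ we may take $z=\ov e_{n+1}$, and then $e_i:=\pi(\ov e_i)$ for $1\le i\le n$ is a basis of $\Lg$ in which the bracket is the ``truncation'' of the bracket of $\Lh$ (drop the $\ov e_{n+1}$-component). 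The central extension is then governed by the $2$-cocycle $\om\in Z^2(\Lg,K)$ determined by $[\ov e_i,\ov e_j]_{\Lh}=(\om(e_i\wedge e_j),[e_i,e_j]_{\Lg})$ as in \eqref{h}, and by Proposition~\ref{h2} this $\om$ is affine.

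Next I would construct the affine structure on $\Lg$ explicitly from a chosen ``affine'' splitting of the extension. The key point is that for a filiform algebra one has a distinguished generator $e_1\notin\Lg^1$ with $(\ad e_1)^{n-2}\ne 0$; in the adapted basis $[e_1,e_i]=e_{i+1}$ for $2\le i\le n-1$ and $\ad e_1$ acts nilpotently with a single Jordan block of length $n-1$ on $\s\{e_2,\dots,e_n\}$. Lifting to $\Lh$, $\ad\ov e_1$ has a single Jordan block of length $n$ on $\s\{\ov e_2,\dots,\ov e_{n+1}\}$, and since $\om$ is affine we have $\om(z\wedge y)\ne0$ for some $y$; because $z=\ov e_{n+1}$ lies at the top of this Jordan block, affineness forces $\om(e_{n-1}\wedge e_n)\ne 0$ after a suitable normalization of the adapted basis. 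I would then define a product $x\cdot y$ on $\Lg$ by a formula of ``canonical type'': roughly, $e_i\cdot e_j$ is read off from the $\Lh$-bracket after fixing the section, with the central direction $z$ providing the extra slot needed. Concretely one sets, for a right-adapted ordering, $e_1\cdot e_i = e_{i+1}$, $e_i\cdot e_1 = e_{i+1}-[e_1,e_i]=0$ on the nose except for corrections, and $e_i\cdot e_j$ ($2\le i,j$) governed by $\om$ together with $[e_i,e_j]_{\Lg}$; the presence of the affine cocycle guarantees the formula is consistent, i.e. does not ``run off'' the top of the filtration. One then verifies \eqref{lsa2}, $x\cdot y-y\cdot x=[x,y]$, which holds by construction since the symmetric part of the product is arbitrary and the antisymmetric part is set to the bracket.

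The substantive step, and the main obstacle, is verifying the left-symmetry identity \eqref{lsa1}, equivalently $(x,y,z)=(y,x,z)$ for the associator. I would reduce this to checking it on basis vectors, and then use the adapted-basis bracket formulas \eqref{lie} together with the cocycle condition on $\om$ (the displayed identity defining $Z^2(\Lg,K)$) to turn the left-symmetry equations into identities that follow from $d_2\om=0$ and the Jacobi identity for $\Lg$. The filiform structure is what makes this tractable: most associators vanish for degree reasons ($\ad e_1$ shifts the grading by one, and products land deep in $\Lg^k$), so only finitely many ``boundary'' cases survive, and each of these is exactly an instance of the $2$-cocycle relation for $\om$ evaluated at $(e_1,e_i,e_j)$ or at a pair from $\Lg^1$. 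I expect the bookkeeping here — keeping track of which terms carry a factor from $\om$ versus from the $\Lg$-bracket, and which are killed by nilpotency of $\ad e_1$ — to be where all the real work lies; once organized, the left-symmetry identity drops out of the cocycle equation, the affine structure is exhibited, and the theorem (hence the preceding Proposition, with $\La=\Lz(\Lh)$ coming from an affine class via Proposition~\ref{h2}) follows.
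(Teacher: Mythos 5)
Your setup is reasonable, but the argument has a genuine gap at its center: you never actually define the left-symmetric product. The phrases ``read off from the $\Lh$-bracket after fixing the section,'' ``$e_i\cdot e_1=\dots$ on the nose except for corrections,'' and ``governed by $\om$ together with $[e_i,e_j]_{\Lg}$'' do not specify a bilinear map, and the verification of \eqref{lsa1} is then deferred to unspecified bookkeeping that you only ``expect'' to work out. This matters because an arbitrary product whose antisymmetrization equals the Lie bracket is almost never left-symmetric: the entire content of the theorem lies in the choice of the symmetric part. The missing idea is the following. Take an adapted basis $(f_1,\dots,f_{n+1})$ of $\Lh$ and consider $\phi:\Lg\to\Lh_3:=\s\{f_3,\dots,f_{n+1}\}$, $\phi(x)=[f_1,\ov{x}]_{\Lh}$, where $\ov{x}$ is any lift of $x$. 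Its restriction to $\Lg_2=\s\{e_2,\dots,e_n\}$ is a bijection onto $\Lh_3$ with inverse $\psi$, and one sets $x\pkt y:=\psi([\ov{x},\phi(y)]_{\Lh})$. With this closed formula, both \eqref{lsa2} and \eqref{lsa1} are two-line consequences of the Jacobi identity in $\Lh$ (using $[f_1,\ov{\psi(w)}]=w$ for $w\in\Lh_3$); no case analysis on basis vectors and no appeal to the cocycle identity for $\om$ is needed. Checking left-symmetry basis vector by basis vector from $d_2\om=0$, as you propose, is not a viable substitute, because without a concrete product there is nothing to check, and a generic section-based product will fail the identity.

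Two smaller points. First, your claim that affineness forces $\om(e_{n-1}\wedge e_n)\ne 0$ (after normalization) is false for $n\ge 4$: the cocycle condition \eqref{ijk} with $i=1$, $k=n$ forces $\om(e_j\wedge e_n)=0$ for all $3\le j\le n$, so only $\om(e_1\wedge e_n)$ or $\om(e_2\wedge e_n)$ can be nonzero. Second, you take as given that $\Lh$ is filiform of dimension $n+1$; this is not among the hypotheses (a priori $\La$ need not be one-dimensional and $\Lh$ need not have nilindex $n$) and requires a separate reduction, which should at least be flagged as a step to be proved rather than asserted as fact.
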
  

\begin{proof}
The first step of the proof consists in showing that
we may assume $\Lh\in \LF_{n+1}(K)$. For this we
refer the reader to the extended version of this article.
Let $(f_1,\dots ,f_{n+1})$ be an adapted basis for $\Lh$ and let
$e_i:=f_i$ mod $\Lz (\Lh)$ for $i=1,\dots, n$.
Then $(e_1,\dots ,e_n)$ is an adapted basis of $\Lg$. Let
$\Lh_3=\s \{f_3,\ldots ,f_{n+1}\}$ and $\Lg_2=\s \{e_2,\ldots ,e_{n}\}$.
There is a uniquely determined linear map $\phi : \Lg \ra \Lh_3$
satisfying $\phi (x)=[f_1,\ov{x}]_{\Lh}$ for all $x\in \Lg$ where
$\ov{x}\in \Lh$ is any element with $\pi (\ov{x})=x$. The restriction
of $\phi$ to $\Lg_2$ is bijective since it is evidently injective. Denote
its inverse by $\psi : \Lh_3 \ra \Lg_2$. Now set for all $x,y\in \Lg$    
\begin{equation}
\label{lsa}
x\pkt y:=\psi ([\ov{x},\phi (y)]_{\Lh})
\end{equation}
The formula is well defined since $[\ov{x},\phi (y)]_{\Lh}
=[\ov{x},[f_1,\ov{y}]]\in \Lh_3$.
We will show that it satisfies conditions \eqref{lsa1} and \eqref{lsa2} of
Definition $\ref{affine}$ and hence defines an affine structure on $\Lg$:

\begin{align*}
x\pkt y-y\pkt x & = \psi([\ov{x},[f_1,\ov{y}]]-[\ov{y},[f_1,\ov{x}]]) \\
 & = \psi([\ov{y},[f_1,\ov{x}]]-[f_1,[\ov{y},\ov{x}]] -[\ov{y},
[f_1,\ov{x}]]) \\
 & = \psi([f_1,[\ov{x},\ov{y}]])=\psi([f_1,\ov{[x,y]}_{\Lg}])
     =\psi(\phi([x,y]_{\Lg})) \\
 & = [x,y]_{\Lg}
\end{align*}
where the brackets are taken in $\Lh$ if not otherwise denoted.
Using the identity $[f_1,\ov{\psi(w)}]=w$ for all $w\in \Lh_3$
and again the Jacobi identity we obtain for all $x,y,z\in \Lg$:

\begin{align*}
x\pkt (y\pkt z)-y\pkt (x\pkt z) & = \psi ([\ov{x},[\ov{y},\phi(z)]]-
[\ov{y},[\ov{x},\phi(z)]]) \\
 & = [x,y]_{\Lg}\pkt z \\
 & = (x\pkt y) \pkt z-(y\pkt x)\pkt z
\end{align*}

\end{proof}    

\section{Computation of $H^2(\Lg,K)$}

In this section we will completely determine the
cohomology groups $H^2(\Lg,K)$ for all $\Lg\in \LF_n(K)$ with $n\le 11$.
We will also give some results for algebras from the classes
$\LA_n^1(K)$ and $\LA_n^2(K)$.
The cohomology spaces give important information on $\Lg$. 
In our case, we obtain a complete description of the
existence of affine cohomology classes. 
Let $(e_1,\ldots ,e_n)$ be an adapted basis for $\Lg$
so that its Lie algebra law lies in $\CA_n(K)$.

\begin{lem}
Let $\om\in \Hom (\Lambda^2\Lg,K)$. Then $\om$ is an affine $2$--cocycle
iff $\om (e_1\wedge e_n)$ or $\om (e_2\wedge e_n)$ is nonzero.
\end{lem}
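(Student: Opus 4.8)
The plan is to identify the center of $\Lg$ in the adapted basis and then use the cocycle identity with trivial coefficients to locate the entries on which an affine cocycle can be nonzero.

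First I would check that $\Lz(\Lg)=\s\{e_n\}$. From the brackets \eqref{lie} one has $[e_1,e_n]=0$, since $n\notin\{2,\dots,n-1\}$, and $[e_i,e_n]=0$ for $2\le i<n$: a nonzero contribution would require some $\al_{i+\ell,\,s}$ with $s=r-n+i+2\ell+1$ lying in $\CI_n$, hence $s\ge 2(i+\ell)+1$ and $s\le n$, which forces $r\ge n+i>n$, impossible. Thus $e_n$ is central, and since the center of a filiform Lie algebra is one-dimensional (see the Remark in Section 2) it equals $\s\{e_n\}$. Therefore $\om$ is affine precisely when $\om(e_n\wedge e_j)\ne 0$ for some $j\in\{1,\dots,n-1\}$, and the reverse implication of the lemma is then immediate: a $2$--cocycle $\om$ with $\om(e_1\wedge e_n)\ne 0$ or $\om(e_2\wedge e_n)\ne 0$ is nonzero on $\Lz(\Lg)\wedge\Lg$, hence affine.

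For the forward implication, let $\om\in Z^2(\Lg,K)$. Since $K$ is the trivial module the module terms in the cocycle condition vanish, leaving
\[
\om([x_1,x_2]\wedge x_3)-\om([x_1,x_3]\wedge x_2)+\om([x_2,x_3]\wedge x_1)=0 .
\]
Evaluating this on the triple $(e_1,e_i,e_n)$ for $2\le i\le n-2$ and using $[e_1,e_i]=e_{i+1}$ together with $[e_1,e_n]=[e_i,e_n]=0$ gives $\om(e_{i+1}\wedge e_n)=0$; letting $i$ range over $2,\dots,n-2$ yields $\om(e_j\wedge e_n)=0$ for all $j$ with $3\le j\le n-1$. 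If in addition $\om$ is affine, the only entries of $\om$ on $\Lz(\Lg)\wedge\Lg$ that can be nonzero are $\om(e_1\wedge e_n)$ and $\om(e_2\wedge e_n)$, so at least one of them is nonzero.

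There is no genuinely hard step here; the only point requiring a little care is that $e_n$ really spans the center for every admissible choice of the parameters $\al_{k,s}$, i.e. that none of the deformation cocycles $\psi_{k,s}$ produces a bracket involving $e_n$ — this is the short index estimate indicated above. Everything else is a direct specialization of the cocycle identity and uses no hypothesis on $K$.
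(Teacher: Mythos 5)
Your proof is correct and follows essentially the same route as the paper: the paper likewise evaluates the cocycle identity on the triples $(e_1,e_j,e_n)$ (its indices $i=1$, $k=n$) to conclude $\om(e_j\wedge e_n)=0$ for $j\ge 3$, and reads off affineness from nonvanishing on $\Lz(\Lg)\wedge\Lg$. The only difference is that you explicitly verify from the adapted-basis bracket formulas that $e_n$ spans the center, a point the paper takes for granted; that check is correct.
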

\begin{proof}
By definition, $\om$ is affine iff $\om (e_j\wedge e_n)\ne 0$ for some
$j\in \{1,\ldots ,n\}$. The condition for $\om$ to be a $2$--cocycle
is as follows:
\begin{equation}\label{ijk}
\om([e_i,e_j]\wedge e_k)-\om([e_i,e_k]\wedge e_j)+\om([e_j,e_k]\wedge
e_i)=0 \quad \text{ for } i<j<k
\end{equation}
Setting $i=1,k=n$ we obtain $\om(e_j \wedge e_n)=0$ for $3\le j\le n$.
\end{proof}

\begin{defi}
Define $\om_{\ell}\in \Hom (\Lambda^2\Lg,K)$ by the nonzero values as follows:
\begin{align}\label{omega}
\om_{\ell}(e_k\wedge e_{2\ell+3-k}) & = (-1)^k \quad \text{ for } 1\le \ell
\le [(n-1)/2],\; 2\le k\le \left[(2\ell+3)/2\right]
\end{align}
\end{defi}
In the following we will mainly use $\om_1,\dots,\om_4$.
They are defined by
\begin{align*}
\om_1(e_2\wedge e_3)& = 1 \\
\om_2(e_2\wedge e_5)& = 1,\;\om_2(e_3\wedge e_4) = -1\\
\om_3(e_2\wedge e_7)& = 1,\;\om_3(e_3\wedge e_6) = -1,\;
\om_3(e_4\wedge e_5) = 1\\
\om_4(e_2\wedge e_9)& = 1,\;\om_4(e_3\wedge e_8) = -1,\;
\om_4(e_4\wedge e_7) = 1,\;\om_4(e_5\wedge e_6) = -1
\end{align*}
\begin{lem}
We have $\om_1,\om_2\in Z^2(\Lg,K)$, whereas $\om_{\ell},\,\ell \ge 3$
need not be $2$--cocycles. If $\ell <[(n-1)/2]$, then $\om_{\ell}$
cannot be an affine $2$--cocycle.
\end{lem}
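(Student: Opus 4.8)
The statement has three parts: (i) $\om_1\in Z^2(\Lg,K)$; (ii) $\om_2\in Z^2(\Lg,K)$; (iii) for $\ell<[(n-1)/2]$ the cochain $\om_\ell$ cannot be an affine $2$-cocycle. Throughout I would work with the adapted basis $(e_1,\dots,e_n)$ and use the cocycle condition in the form \eqref{ijk}, namely $\om([e_i,e_j]\wedge e_k)-\om([e_i,e_k]\wedge e_j)+\om([e_j,e_k]\wedge e_i)=0$ for $i<j<k$, together with the explicit bracket formulas \eqref{lie}. Note first that, since the trivial module $K$ is one-dimensional and $\om([e_i,e_j]\wedge e_k)$ only ever pairs a bracket (which lies in $\Lg^1=\s\{e_3,\dots,e_n\}$, except for $[e_1,e_i]=e_{i+1}$) against a basis vector, each term in \eqref{ijk} is a $K$-linear combination of the values $\om_\ell(e_a\wedge e_b)$.

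For (i): $\om_1$ is supported only on $e_2\wedge e_3$. The only way a term $\om_1([e_i,e_j]\wedge e_k)$ is nonzero is if $\{$one of the basis vectors appearing in $[e_i,e_j]$, together with $e_k\}$ equals $\{e_2,e_3\}$ up to sign. Since brackets land in $\s\{e_3,\dots,e_n\}$, the only candidate is a bracket with an $e_3$-component paired with $e_2$; i.e. in \eqref{ijk} with $i<j<k$ one needs $k=3$, forcing $i=1,j=2$, but then $[e_1,e_2]=e_3$ has no $e_2$-component and $\om_1([e_1,e_3]\wedge e_2)=\om_1(e_4\wedge e_2)=0$, $\om_1([e_2,e_3]\wedge e_1)=0$. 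So all three terms vanish and $\om_1$ is a cocycle. (Equivalently, $\om_1$ is dual to the center-complement pairing in the standard filiform and one checks $d_1$ directly.)

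For (ii): $\om_2$ is supported on $e_2\wedge e_5$ and $e_3\wedge e_4$, with values $1$ and $-1$. I would run through \eqref{ijk} for all $i<j<k$ and collect the finitely many triples for which some term can be nonzero: a term $\om_2([e_a,e_b]\wedge e_c)$ is nonzero only if the bracket $[e_a,e_b]$ has an $e_5$-component paired with $e_2$, or an $e_4$-component paired with $e_3$. Using \eqref{lie} one sees the relevant low-index brackets are $[e_1,e_3]=e_4$, $[e_1,e_4]=e_5$, $[e_2,e_3]=\al_{2,5}e_5+\cdots$. The genuinely nontrivial check is the triple $(1,2,5)$: here $\om_2([e_1,e_2]\wedge e_5)-\om_2([e_1,e_5]\wedge e_2)+\om_2([e_2,e_5]\wedge e_1)=\om_2(e_3\wedge e_5)-\om_2(e_6\wedge e_2)+0=0-0+0$, and the triple $(1,3,4)$: $\om_2(e_4\wedge e_4)-\om_2(e_5\wedge e_3)+\om_2([e_3,e_4]\wedge e_1)=0-0+0$, and $(2,3,4)$, $(2,3,5)$, $(2,4,5)$, $(3,4,5)$ similarly — each reduces to a cancellation of two equal terms or to pairings against $e_1$ (which vanish because $\om_2$ has no $e_1$-component) or against indices outside $\{2,3,4,5\}$. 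The point is that the signs $(+1,-1)$ in the definition of $\om_2$ are exactly chosen so that the two potentially surviving terms cancel; I would present this as a short case table rather than prose.

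For (iii): Suppose $\ell<[(n-1)/2]$. By the Lemma just above \eqref{omega}'s discussion (the affine-cocycle criterion), an affine $2$-cocycle must be nonzero on $e_1\wedge e_n$ or $e_2\wedge e_n$. But $\om_\ell$ is supported, by \eqref{omega}, on pairs $e_k\wedge e_{2\ell+3-k}$ with $2\le k\le[(2\ell+3)/2]$; in particular it is zero on any pair involving $e_1$, and the only pair of the form $e_2\wedge e_m$ on which it is nonzero has $m=2\ell+1$. The hypothesis $\ell<[(n-1)/2]$ gives $2\ell+1<n$ (indeed $2\ell+1\le n-2$ when $n$ is even and $\le n-1$ when $n$ is odd, but in the odd case $2\ell+1=n$ would force $\ell=[(n-1)/2]$, contradicting strict inequality). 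Hence $\om_\ell(e_1\wedge e_n)=\om_\ell(e_2\wedge e_n)=0$, so $\om_\ell$ fails the affine criterion and cannot be an affine $2$-cocycle regardless of whether it happens to be a cocycle.

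**Main obstacle.** The only part requiring care is (ii): one must be sure the case table for $\om_2$ is complete, i.e. that no triple $(i,j,k)$ with larger indices produces an uncancelled term through the higher $\al_{k,s}$-terms in \eqref{lie}. This is handled by observing that every nonzero value of $\om_2$ pairs an index in $\{2,3\}$ with an index in $\{4,5\}$, so in \eqref{ijk} at least two of $i,j,k$ must lie in $\{2,3,4,5\}$ and the third is then pinned down (or equals $1$, killing the term) — reducing to a short finite check. Parts (i) and (iii) are essentially formal once the support of $\om_\ell$ is read off from \eqref{omega}.
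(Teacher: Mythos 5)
Your overall strategy coincides with the paper's: verify the cocycle identity \eqref{ijk} directly on the support of $\om_1,\om_2$, and dispose of the affineness claim by observing that $\om_{\ell}$ vanishes on everything of the form $e_j\wedge e_n$ when $\ell<[(n-1)/2]$. Parts (i) and (iii) are correct; for (iii) you do not even need the $e_1$/$e_2$ criterion from the preceding lemma, since $\om_{\ell}$ vanishes identically on $\Lz(\Lg)\wedge\Lg=\s\{e_n\}\wedge\Lg$, which is exactly the paper's one-line argument.

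Part (ii), however, has a concrete gap: your case list omits the one triple where the identity is not trivially $0=0$. The triple you single out as ``the genuinely nontrivial check'', namely $(1,2,5)$, has all three terms individually zero. The nontrivial triple is $(i,j,k)=(1,2,4)$, where \eqref{ijk} reads $\om_2([e_1,e_2]\wedge e_4)-\om_2([e_1,e_4]\wedge e_2)+\om_2([e_2,e_4]\wedge e_1)=\om_2(e_3\wedge e_4)-\om_2(e_5\wedge e_2)=(-1)-(-1)=0$; this is precisely the place where the sign pattern $(+1,-1)$ of $\om_2$ is used, and it appears nowhere in your list, so as written you never verify the only identity that could fail. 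The completeness heuristic in your closing paragraph is also incorrect on exactly this point: you assert that if the remaining index equals $1$ the term is ``killed'', but an index $1$ \emph{inside} a bracket is what generates the nonzero contributions, since $[e_1,e_i]=e_{i+1}$ moves the support of $\om_2$ into itself; only the single term $\om_2([e_j,e_k]\wedge e_1)$ dies. The paper's proof avoids this trap by checking the whole family $(1,2,k)$, $k\ge3$, for which \eqref{ijk} reduces to $\om(e_3\wedge e_k)=\om(e_{k+1}\wedge e_2)$; at $k=4$ this is the identity $-1=-1$ for $\om_2$. Repairing your case table to include all triples $(1,j,k)$ with $j,k$ in the relevant range closes the gap.
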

\begin{proof}
The first claim follows easily from equation \eqref{ijk}.
In the case $i=1,j=2$ it reduces to $\om_1(e_3\wedge e_k)=
\om_1(e_{k+1}\wedge e_2)$ for $k\ge 3$.
On the other hand we have $\om_{\ell}(e_i\wedge e_n)=0,\, 1\le i\le n$ for
$\ell <[(n-1)/2]$.
\end{proof}

It is interesting to consider
filiform Lie algebras with minimal
second Betti number. It is not difficult to show the
following result:

\begin{prop} \label{be2}
Let $\Lg\in\LF_n(K),\, n\ge 6$ be a filiform Lie algebra with $b_2(\Lg)=2$.
Then there exists no affine $[\om]\in H^2(\Lg,K)$.
\end{prop}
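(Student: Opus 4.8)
The plan is to argue by contradiction using the dimension count $b_2(\Lg)=\dim H^2(\Lg,K)=2$ together with the explicit cocycles exhibited above. First I would recall that for a filiform Lie algebra $\Lg\in\LF_n(K)$ we always have $b_1(\Lg)=\dim H^1(\Lg,K)=\dim(\Lg/\Lg^1)=2$, so that $B^1(\Lg,K)$ has codimension $2$ in $\Lg^*$ and hence $\dim B^2(\Lg,K)=\dim\Lg^*-\dim Z^1(\Lg,K)=n-2$. Combined with $b_2(\Lg)=2$ this gives $\dim Z^2(\Lg,K)=n$. The two cocycles $\om_1$ and $\om_2$ from the preceding Lemma lie in $Z^2(\Lg,K)$; I would check that they are linearly independent modulo $B^2(\Lg,K)$, so that $\{[\om_1],[\om_2]\}$ is in fact a basis of $H^2(\Lg,K)$. (Independence modulo coboundaries is immediate because a coboundary $\xi=d_1 f$ vanishes on $\Lambda^2$ of any abelian subalgebra containing $[\Lg,\Lg]$, and in particular $\xi(e_2\wedge e_3)=f([e_2,e_3])$, $\xi(e_2\wedge e_5)=f([e_2,e_5])$, $\xi(e_3\wedge e_4)=f([e_3,e_4])$ all lie in $f(\Lg^1)$; a short computation with the adapted brackets \eqref{lie} shows $\om_1,\om_2$ cannot be matched this way.)

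Next I would analyze what an affine cohomology class would have to look like. Suppose $[\om]\in H^2(\Lg,K)$ is affine, i.e. every representative is nonzero on $\Lz(\Lg)\wedge\Lg=\s\{e_n\}\wedge\Lg$. By the Lemma preceding the definition of $\om_\ell$, a $2$--cocycle $\om$ is affine iff $\om(e_1\wedge e_n)\ne 0$ or $\om(e_2\wedge e_n)\ne 0$, and in fact the cocycle condition \eqref{ijk} with $i=1,k=n$ forces $\om(e_j\wedge e_n)=0$ for $3\le j\le n$. Since $\om_1,\om_2$ are supported on basis pairs $e_k\wedge e_{2\ell+3-k}$ with $2\ell+3\le 7<n$ (using $n\ge 6$; for $n=6,7$ one checks the indices directly), neither $\om_1$ nor $\om_2$ is nonzero on $e_1\wedge e_n$ or $e_2\wedge e_n$, so neither is affine. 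Writing $[\om]=\la_1[\om_1]+\la_2[\om_2]$, there is a representative $\om=\la_1\om_1+\la_2\om_2$ which again vanishes on all $e_j\wedge e_n$, hence is not affine — so $[\om]$ has a non-affine representative and thus is not an affine class. This is the contradiction.

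The main obstacle, and the step deserving care, is the bookkeeping showing that $H^2(\Lg,K)$ really is spanned by $[\om_1]$ and $[\om_2]$ rather than by two other classes — equivalently, ruling out that an affine cocycle could appear "from outside" the span of $\om_1,\om_2$. The clean way to do this is not to identify a basis explicitly but to argue at the level of representatives: the subspace $W=\{\om\in Z^2(\Lg,K):\om(e_j\wedge e_n)=0\text{ for all }j\}$ is exactly the set of non-affine cocycles together with $0$, and one shows $Z^2(\Lg,K)=W$ whenever $b_2(\Lg)=2$. Indeed any affine cocycle $\om$ with $\om(e_1\wedge e_n)\ne0$ or $\om(e_2\wedge e_n)\ne0$ is nonzero in $H^2$ by the first Lemma of the section, and one checks that such an $\om$ is independent of $[\om_1],[\om_2]$ in $H^2$ (it is nonzero on $\Lz(\Lg)\wedge\Lg$ while every element of $\s\{\om_1,\om_2\}+B^2$ vanishes there); this would force $b_2(\Lg)\ge 3$, contradicting the hypothesis. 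Hence no affine cocycle exists at all, and a fortiori no affine class. The only real computation is verifying that $\om_1,\om_2$ together with an arbitrary hypothetical affine cocycle are independent modulo $B^2$, which follows from their disjoint "supports" relative to the filtration by the lower central series.
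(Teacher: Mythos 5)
Your proof is correct and is precisely the argument the paper intends (the paper states this proposition without proof, but the surrounding lemmas and the remark that $\s\{[\om_1],[\om_2]\}\subseteq H^2(\Lg,K)$ for $n\ge 5$ are set up exactly for it): since for $n\ge 6$ neither $\om_1$, $\om_2$, nor any cocycle in $\s\{\om_1,\om_2\}+B^2(\Lg,K)$ is nonzero on $\Lz(\Lg)\wedge\Lg=\s\{e_n\}\wedge\Lg$, an affine cocycle would produce a third independent class and force $b_2(\Lg)\ge 3$, while $b_2(\Lg)=2$ leaves $H^2(\Lg,K)=\s\{[\om_1],[\om_2]\}$ with only non-affine representatives. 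The one step to state more carefully is the independence of $[\om_1],[\om_2]$ modulo coboundaries: your phrase that the values $\xi(e_2\wedge e_3)$, etc., ``lie in $f(\Lg^1)$'' does not by itself conclude anything, since $f$ is a priori arbitrary on $\Lg^1$; the correct route is that $\la_1\om_1+\la_2\om_2=d_1f$ vanishes on all $e_1\wedge e_i$, so $f(e_{i+1})=-(d_1f)(e_1\wedge e_i)=0$ forces $f|_{\Lg^1}=0$, hence $d_1f\equiv 0$ and then $\la_1=\la_2=0$ by evaluating at $e_2\wedge e_3$ and $e_2\wedge e_5$ (likewise $\dim B^1=0$, not codimension $2$, in your unused dimension count).
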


We come now to the computation of the cohomology.  
We have to divide the filiform algebras into several well defined classes.
However, the number of classes should be as small as possible.
Hence we do not use the classification results of filiform Lie algebras. 
We divide $\CA_n(K),\,6\le n\le 11$
into the following subsets depending on certain equalities or inequalities
of the structure constants. These subsets correspond to well defined
classes of filiform Lie algebras:

\vspace*{0.5cm}
\begin{tabular}{|c|c|}
\hline
Class & Conditions \\
\hline\hline
$\CA_{6,1}$ & $\al_{3,6}\ne 0$ \\ \hline
$\CA_{6,2}$ & $\al_{3,6}= 0$   \\ \hline
$\CA_{7,1}$ & $2\al_{2,5}+\al_{3,7}\ne 0$   \\ \hline
$\CA_{7,2}$ & $2\al_{2,5}+\al_{3,7}=   0$   \\ \hline
$\CA_{8,1}$ & $\al_{4,8}\ne 0,\,2\al_{2,5}+\al_{3,7}=0$   \\ \hline
$\CA_{8,2}$ & $\al_{4,8}=0,\,2\al_{2,5}+\al_{3,7}\ne 0$   \\ \hline
$\CA_{8,3}$ & $\al_{4,8}=0,\,2\al_{2,5}+\al_{3,7}=0,\, \al_{2,5}\ne 0$   \\ \hline
$\CA_{8,4}$ & $\al_{2,5}=\al_{3,7}=\al_{4,8}=0$   \\ \hline
$\CA_{9,1}$ & $2\al_{2,5}+\al_{3,7}\ne 0,\, \al_{3,7}^2\ne \al_{2,5}^2$ \\ \hline
$\CA_{9,2}$ & $2\al_{2,5}+\al_{3,7}\ne 0,\, \al_{3,7}^2=   \al_{2,5}^2$ \\ \hline
$\CA_{9,3}$ & $\al_{2,5}=\al_{3,7}= 0,\, \al_{4,9}\ne 0,\, \al_{2,6}+\al_{3,8}\ne 0$ \\ \hline
$\CA_{9,4}$ & $\al_{2,5}=\al_{3,7}= 0,\, \al_{4,9}\ne 0,\, \al_{2,6}+\al_{3,8}= 0$ \\ \hline
$\CA_{9,5}$ & $\al_{2,5}=\al_{3,7}=\al_{4,9}=0,\, 2\al_{2,7}+\al_{3,9}\ne 0$ \\ \hline
$\CA_{9,6}$ & $\al_{2,5}=\al_{3,7}=\al_{4,9}=0,\, 2\al_{2,7}+\al_{3,9}= 0$ \\ \hline 
$\CA_{10,1}$ & $\al_{5,10}\ne 0,\,2\al_{2,5}+\al_{3,7}\ne 0$ \\ \hline
$\CA_{10,2}$ & $\al_{5,10}\ne 0,\,2\al_{2,5}+\al_{3,7}=   0$ \\ \hline                                                  
$\CA_{10,3}$ & $\al_{5,10}= 0,\,2\al_{2,5}+\al_{3,7}\ne 0,\, \al_{3,7}^2\ne \al_{2,5}^2 $ \\ \hline              
$\CA_{10,4}$ & $\al_{5,10}= 0,\,2\al_{2,5}+\al_{3,7}\ne 0\, \al_{3,7}^2= \al_{2,5}^2$ \\ \hline                            
$\CA_{10,5}$ & $\al_{5,10}= 0,\,2\al_{2,5}+\al_{3,7}= 0,\, \al_{4,9}\ne 0,\, \al_{2,6}^2+2\al_{2,7}
\al_{4,9}\ne 0$ \\ \hline                          
$\CA_{10,6}$ & $\al_{5,10}= 0,\,2\al_{2,5}+\al_{3,7}= 0,\, \al_{4,9}\ne 0,\,\al_{2,6}^2+2\al_{2,7}
\al_{4,9}= 0$ \\ \hline                                    
$\CA_{10,7}$ & $\al_{5,10}= 0,\,2\al_{2,5}+\al_{3,7}= 0,\,\al_{4,9}=0,\, 2\al_{2,7}+\al_{3,9}\ne 0$ \\ \hline             
$\CA_{10,8}$ & $\al_{5,10}= 0,\,2\al_{2,5}+\al_{3,7}= 0,\,\al_{4,9}=0,\, 2\al_{2,7}+\al_{3,9}= 0
,\, \al\ne 0$ \\ \hline                                       
$\CA_{10,9}$ & $\al_{5,10}= 0,\,2\al_{2,5}+\al_{3,7}= 0,\,\al_{4,9}=0,\,2\al_{2,7}+\al_{3,9}= 0 
,\, \al= 0$ \\ \hline                                          
%\end{tabular}
%
%\vspace*{0.5cm}
%\begin{tabular}{|c|c|}
%\hline
%Class & Conditions \\
%\hline\hline
$\CA_{11,1}$ & $2\al_{2,5}+\al_{3,7}\ne 0,\, 10\al_{3,7}-\al_{2,5}\ne 0,\, \be \ne 0$ \\ \hline
$\CA_{11,2}$ & $2\al_{2,5}+\al_{3,7}\ne 0,\, 10\al_{3,7}-\al_{2,5}\ne 0,\, \be = 0$ \\ \hline
$\CA_{11,3}$ & $2\al_{2,5}+\al_{3,7}\ne 0,\,10\al_{3,7}-\al_{2,5}= 0 $ \\ \hline
$\CA_{11,4}$ & $2\al_{2,5}+\al_{3,7}= 0,\, \al_{4,9}\ne 0$ \\ \hline
$\CA_{11,5}$ & $\al_{2,5}=\al_{3,7}=\al_{4,9}=0,\, \al_{5,11}\ne 0,\, 4\al_{4,10}+2\al_{3,8}-3\al_{2,6}\ne 0,
\al \ne 0$ \\ \hline
$\CA_{11,6}$ & $\al_{2,5}=\al_{3,7}=\al_{4,9}=0,\, \al_{5,11}\ne 0,\, 4\al_{4,10}+2\al_{3,8}-3\al_{2,6}\ne 0,\,
\al = 0$ \\ \hline 
$\CA_{11,7}$ & $\al_{2,5}=\al_{3,7}=\al_{4,9}=0,\, \al_{5,11}\ne 0,\,
4\al_{4,10}+2\al_{3,8}-3\al_{2,6}= 0,\, \ga \ne 0 $ \\ \hline
$\CA_{11,8}$ & $\al_{2,5}=\al_{3,7}=\al_{4,9}=0,\, \al_{5,11}\ne 0,\, 4\al_{4,10}+2\al_{3,8}-3\al_{2,6}
= 0,\, \ga = 0,\, \de \ne 0$ \\ \hline
$\CA_{11,9}$ & $\al_{2,5}=\al_{3,7}=\al_{4,9}=0,\, \al_{5,11}\ne 0,\, 4\al_{4,10}+2\al_{3,8}-3\al_{2,6}
= 0,\, \ga = 0,\, \de = 0$ \\ \hline
$\CA_{11,10}$ & $\al_{2,5}=\al_{3,7}=\al_{4,9}=\al_{5,11}=0$ \\ \hline 
\end{tabular}     

\vspace*{0.5cm}
where

\begin{align*}
\al & = 3\al_{4,10}(\al_{2,6}+\al_{3,8})-4\al_{3,8}^2 \\
\be & =(2\al_{2,5}^2-5\al_{3,7}^2)(4\al_{2,5}^2-4\al_{2,5}\al_{3,7}+3\al_{3,7}^2)\\
\ga & = 22\al_{3,8}^2-3\al_{2,6}\al_{3,8}-9\al_{2,6}^2 \\
\de & = \al_{5,11}(4\al_{3,10}+5\al_{2,8})-3\al_{4,11}(\al_{2,6}+\al_{3,8})+
2\al_{2,7}(3\al_{2,6}-11\al_{3,8})
\end{align*}

The result of the computation is as follows:\\

\begin{prop}
The following table shows the cohomology
spaces $H^2(\Lg_{\la},K)$ for all $\la\in \CA_n(K),\, 3\le n\le 11$. 
The corresponding Lie algebras $\Lg_{\la}$ 
admit an affine cohomology class as follows:\\[0.5cm]
\hspace*{1.8cm}
\begin{tabular}{|c|c|c|c|c|}
\hline
$\dim \Lg_{\la}$ & Class & $H^2(\Lg_{\la},K)$  & affine $\om$ & $b_2(\Lg_{\la})$ \\
\hline\hline
$3$ & $\CA_3$     & $\om_1,\om$       & $\checkmark$   & $2$ \\ \hline 
$4$ & $\CA_4$     & $\om_1,\om$       & $\checkmark$   & $2$ \\ \hline
$5$ & $\CA_5$     & $\om_1,\om_2,\om$ & $\checkmark$   & $3$ \\ \hline
$6$ & $\CA_{6,1}$ & $\om_1,\om_2$     & $\mi$          & $2$ \\ \hline
$6$ & $\CA_{6,2}$ & $\om_1,\om_2,\om$ & $\checkmark$   & $3$ \\ \hline
$7$ & $\CA_{7,1}$ & $\om_1,\om_2,\om$ & $\checkmark$   & $3$ \\ \hline
$7$ & $\CA_{7,2}$ & $\om_1,\om_2,\om_3,\om$ & $\checkmark$  & $4$ \\ \hline
$8$ & $\CA_{8,1}$ & $\om_1,\om_2,\om_3$     & $\mi$         & $3$ \\ \hline
$8$ & $\CA_{8,2}$ & $\om_1,\om_2,\om$ & $\checkmark$   & $3$ \\ \hline
$8$ & $\CA_{8,3}$  & $\om_1,\om_2,\om_3$ & $\mi$          & $3$ \\ \hline
$8$ & $\CA_{8,4}$  & $\om_1,\om_2,\om_3,\om$ & $\checkmark$ & $4$ \\ \hline
$9$ & $\CA_{9,1}$  & $\om_1,\om_2,\om$ & $\checkmark$   & $3$ \\ \hline
$9$ & $\CA_{9,2}$  & $\om_1,\om_2,\om,\om'$ & $\checkmark$   & $4$ \\ \hline
$9$ & $\CA_{9,3}$  & $\om_1,\om_2,\om_3$ & $\mi$          & $3$ \\ \hline
$9$ & $\CA_{9,4}$  & $\om_1,\om_2,\om_3,\om$ & $\checkmark$   & $4$ \\ \hline
$9$ & $\CA_{9,5}$  & $\om_1,\om_2,\om_3,\om$ & $\checkmark$   & $4$ \\ \hline
$9$ & $\CA_{9,6}$  & $\om_1,\om_2,\om_3,\om,\om'$ &$\checkmark$& $5$ \\ \hline
$10$& $\CA_{10,1}$ & $\om_1,\om_2,\om_3$ & $\mi$         & $3$ \\ \hline
$10$& $\CA_{10,2}$ & $\om_1,\om_2,\om_3,\om_4$ & $\mi$       & $4$ \\ \hline
$10$& $\CA_{10,3}$ & $\om_1,\om_2,\om$   & $\checkmark$  & $3$ \\ \hline
$10$& $\CA_{10,4}$ & $\om_1,\om_2,\om_3$  & $\mi$         & $3$ \\ \hline
$10$& $\CA_{10,5}$ & $\om_1,\om_2,\om_3$  & $\mi$         & $3$ \\ \hline
$10$& $\CA_{10,6}$ & $\om_1,\om_2,\om_3,\om$  & $\checkmark$  & $4$ \\ \hline
$10$& $\CA_{10,7}$ & $\om_1,\om_2,\om_3,\om$  & $\checkmark$  & $4$ \\ \hline
$10$& $\CA_{10,8}$ & $\om_1,\om_2,\om_3,\om_4$ & $\mi$        & $4$ \\ \hline
$10$& $\CA_{10,9}$ & $\om_1,\om_2,\om_3,\om_4,\om$&$\checkmark$& $5$ \\ \hline
%\end{tabular}   
%
%\begin{tabular}{|c|c|c|c|c|}
%\hline
%$\dim \Lg_{\la}$ & Class & $H^2(\Lg_{\la},K)$ & affine $\om$ & $b_2(\Lg_{\la})$ \\
%\hline\hline
$11$& $\CA_{11,1}$ & $\om_1,\om_2$  & $\mi$         & $2$ \\ \hline
$11$& $\CA_{11,2}$ & $\om_1,\om_2,\om$   & $\checkmark$  & $3$ \\ \hline
$11$& $\CA_{11,3}$ & $\om_1,\om_2,\om$   & $\checkmark$  & $3$ \\ \hline
$11$& $\CA_{11,4}$ & $\om_1,\om_2,\om_3$ & $\mi$         & $3$ \\ \hline
$11$& $\CA_{11,5}$ & $\om_1,\om_2,\om_3$ & $\mi$         & $3$ \\ \hline 
$11$& $\CA_{11,6}$ & $\om_1,\om_2,\om_3,\om_4$ & $\mi$   & $4$ \\ \hline 
$11$& $\CA_{11,7}$ & $\om_1,\om_2,\om_3,\om$ & $\checkmark$  & $4$ \\ \hline
$11$& $\CA_{11,8}$ & $\om_1,\om_2,\om_3,\om_4$ & $\mi$   & $4$ \\ \hline 
$11$& $\CA_{11,9}$ & $\om_1,\om_2,\om_3,\om_4,\om$ & $\checkmark$   & $5$ \\ \hline 
\end{tabular} 
\end{prop}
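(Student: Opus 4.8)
This is a finite computation that we organize along the stratification of $\CA_n(K)$ displayed above, so the plan is to reduce the whole statement to parametric linear algebra on the space of $2$--cocycles and then process the strata one by one. First I would record two structural facts. Since $\Lg_\la$ is filiform, $[\Lg_\la,\Lg_\la]=\s\{e_3,\dots,e_n\}$ has dimension $n-2$, and the coboundary map $d_1\colon\Lg_\la^*\to\Lambda^2\Lg_\la^*$ has kernel the annihilator of $[\Lg_\la,\Lg_\la]$, of dimension $2$; hence $\dim B^2(\Lg_\la,K)=n-2$ independently of $\la$, so $b_2(\Lg_\la)=\dim Z^2(\Lg_\la,K)-(n-2)$ and it suffices to compute $Z^2(\Lg_\la,K)$ in each class. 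Secondly, the center is $\s\{e_n\}$, so every coboundary $\xi=d_1f$ satisfies $\xi(e_i\wedge e_n)=-f([e_i,e_n])=0$; combined with the fact (already used in the Lemma characterizing affine cocycles, via $i=1,k=n$ in \eqref{ijk}) that $\om(e_j\wedge e_n)=0$ for $3\le j\le n$ whenever $\om\in Z^2(\Lg_\la,K)$, this yields the criterion for the last column: $\Lg_\la$ admits an affine cohomology class if and only if the linear functional $\om\mapsto\bigl(\om(e_1\wedge e_n),\,\om(e_2\wedge e_n)\bigr)$ does not vanish identically on $Z^2(\Lg_\la,K)$.

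Next I would write out the cocycle equations. In an adapted basis (Lemma \ref{pro}) the brackets are \eqref{lie}, so for each triple $i<j<k$ equation \eqref{ijk} becomes a linear relation among the $\binom n2$ coordinates $\om(e_a\wedge e_b)$ whose coefficients are affine--linear in the $\al_{k,s}$. Thus $Z^2(\Lg_\la,K)=\ker M(\la)$ for an explicit matrix $M(\la)$ depending polynomially on $\la$, and $\dim Z^2=\binom n2-\operatorname{rank}M(\la)$. The point of subdividing $\CA_n(K)$ into the classes $\CA_{n,i}$ is that $\operatorname{rank}M(\la)$ is constant on each one: the listed equalities and inequalities govern the ranks of the relevant submatrices of $M(\la)$, and the auxiliary polynomials $\al,\be,\ga,\de$ occurring for $n=10,11$ are exactly the factors that control the last rank drops. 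On a fixed class one then carries out Gaussian elimination symbolically over $K(\al_{k,s})$: the non--vanishing conditions license the pivots, while the vanishing conditions and the Jacobi relations (vacuous for $n\le7$; for $8\le n\le11$ the few explicit identities such as the $n=9$ relation $\al_{4,9}(2\al_{2,5}+\al_{3,7})-3\al_{3,7}^2=0$) collapse the remaining entries, and one reads off $\dim Z^2$, hence $b_2(\Lg_\la)$.

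It remains, in each class, to exhibit a basis of $H^2(\Lg_\la,K)$ and to apply the affine criterion. The natural representatives are the cochains $\om_\ell$ of \eqref{omega} — of which $\om_1,\om_2$ are always cocycles, whereas for $\ell\ge3$ one must check \eqref{ijk} against the relations defining the class — together with one or two further explicit cocycles, denoted $\om$ and $\om'$ in the table, normalized so that $\om(e_2\wedge e_n)\ne0$ (for $n$ odd one may take $\om=\om_{(n-1)/2}$; for $n$ even one takes a suitable cocycle with $\om(e_2\wedge e_n)\ne0$ whenever $Z^2(\Lg_\la,K)$ contains one). That these descend to a basis of $H^2(\Lg_\la,K)$ follows from the first paragraph: $B^2(\Lg_\la,K)$ vanishes on every $e_i\wedge e_n$, so a cocycle nonzero there is nontrivial in cohomology, and the distinct supports of the $\om_\ell$ separate them from each other and from the span of $B^2$. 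Finally the last column is determined by the criterion above: a class is marked $\checkmark$ precisely when the elimination of the previous paragraph leaves $\om(e_1\wedge e_n)$ or $\om(e_2\wedge e_n)$ free in $Z^2(\Lg_\la,K)$. Observe that $\checkmark$ versus $\mi$ is not governed by $b_2$ alone — several classes with $b_2=3$ or $4$ carry $\mi$ — and that Proposition \ref{be2} is recovered as the special case $b_2=2$.

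The main obstacle is the scale and the completeness of the case analysis rather than any isolated difficulty: for $n=11$ one solves a $\binom{11}{3}\times\binom{11}{2}$ parametric linear system on each of ten strata, and the genuine work is to check that the displayed strata exhaust $\CA_n(K)$ and that $\operatorname{rank}M(\la)$ is really constant on each — equivalently, that every relevant minor of $M(\la)$ factors through the polynomials $\al,\be,\ga,\de$ together with the Jacobi relations, with no further degeneration hidden inside some $\CA_{n,i}$. Everything else is routine symbolic linear algebra of the kind the paper already illustrates with the $n=9$ Jacobi identity and the explicit cocycles $\om_1,\dots,\om_4$.
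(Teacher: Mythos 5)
Your proposal is correct and takes essentially the same route as the paper: the structural reductions you derive (that $\dim B^2(\Lg_\la,K)=n-2$ because $b_1=2$ for filiform algebras, and that an affine class exists iff some cocycle is nonzero on $e_1\wedge e_n$ or $e_2\wedge e_n$) are exactly the paper's preparatory lemmas, and the remaining content of the table is, in both your plan and the paper, a stratified parametric Gaussian elimination on the cocycle equations in an adapted basis — which the paper carries out by machine (REDUCE) and you describe algorithmically without executing. The only caveat is that neither your write-up nor the paper's exhibits the actual eliminations, so the individual entries (in particular the rank-constancy on each $\CA_{n,i}$ and the role of the polynomials $\al,\be,\ga,\de$) rest on the deferred computation.
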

\vspace{0.3cm}
The notations here are as follows. By $\om_1,\ldots,\om_4$ we denote always
the $2$--cocycles defined by \eqref{omega}. The letter $\om$ stands for an 
affine $2$--cocycle,
which might be different for distinct classes of Lie algebras.
The same holds for $\om'$. For the cohomology spaces the table shows representing
$2$--cocycles for a basis. So $\om_1,\om_2,\om$ in the table means that
$([\om_1],[\om_2],[\om])$ is a basis of $H^2(\Lg_{\la},K)$.
Note that for $n\ge 5$ the two-dimensional subspace spanned by
$[\om_1],[\om_2]$ is always contained in $H^2(\Lg_{\la},K)$.
A checkmark denotes the existence and a minus sign the absence of an
affine $2$--cocycle. 

\begin{rem}
We have also determined the
cohomology and the affine cohomology classes for the
class $\CA_{11,10}$. 
However, to state the result here would us require to introduce too 
many subclasses. 
On the other hand, it is not difficult to show that
all such algebras admit an affine structure. 
\end{rem}

\begin{rem}
Let $\la\in \CA_{6,1}$. Then $\Lg_{\la}$ does not admit an affine
$[\om]\in H^2(\Lg_{\la},K)$. However, $\Lg_{\la}$ admits an affine
structure since there exists a nonsingular derivation.
\end{rem} 

The computations have been done with the computer algebra
package REDUCE. For the Lie algebras of the classes 
$\LA_n^1(K)$ and $\LA_n^2(K)$ we have obtained the following results.

\begin{thm}
Let $\Lg \in \LA_n^1(K),\, n\ge 12$. Then $b_2(\Lg)\ge 3$ and
there always exists an affine cohomology class. Hence all algebras
$\Lg \in \LA_n^1(K),\, n\ge 12$ admit a canonical affine structure.
\end{thm}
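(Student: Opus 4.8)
The plan is to reduce the statement to a concrete computation in an adapted basis, using Theorem~\ref{ext} to convert the cohomological information into an affine structure. First I would recall that, by definition, the class $\LA_n^1(K)$ consists of filiform Lie algebras satisfying properties (b), (c), (d), and that these properties are conveniently expressed in terms of the structure constants $\al_{k,s}$ of an adapted basis (as asserted in the excerpt just before the definition of $\LA_n^1(K)$). So the first step is to write down explicitly what (b), (c), (d) mean for the $\al_{k,s}$: property (d), $[\Lg^1,\Lg^1]\subseteq\Lg^6$, forces certain low-index coefficients (those $\al_{k,s}$ with $k$ small and $s$ small, i.e.\ the ones governing $[e_2,e_3]$, $[e_2,e_4]$, $[e_3,e_4]$, etc.) to vanish; property (b) says that no one-codimensional subspace $U\supseteq\Lg^1$ has $[U,\Lg^1]\subseteq\Lg^4$, which translates into a \emph{non}vanishing condition on a particular combination of $\al_{2,5}$ and $\al_{3,7}$ (this is exactly the dichotomy $2\al_{2,5}+\al_{3,7}=0$ versus $\neq0$ that runs through the whole table, so property (b) pins down $2\al_{2,5}+\al_{3,7}=0$ together with a genuine nonvanishing of the relevant next-order data); and property (c) is an abelianness condition on $\Lg^{(n-4)/2}$ for even $n$, again a vanishing condition on high-index $\al_{k,s}$. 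I would assemble these into a normal form for the law $\la\in\CA_n(K)$.

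Next I would compute $H^2(\Lg,K)$ for this normal form. The key technical tool is equation~\eqref{ijk}: a linear form $\om$ on $\Lambda^2\Lg$ is a $2$--cocycle iff $\om([e_i,e_j]\wedge e_k)-\om([e_i,e_k]\wedge e_j)+\om([e_j,e_k]\wedge e_i)=0$ for all $i<j<k$, where the brackets are given by the Vergne formula~\eqref{lie}. As in the proof of the cocycle lemma, taking $i=1$ and $k=n$ already kills $\om(e_j\wedge e_n)$ for $3\le j\le n$, so affineness of $\om$ is equivalent to $\om(e_1\wedge e_n)$ or $\om(e_2\wedge e_n)\neq0$. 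I would then run the cocycle conditions~\eqref{ijk} against the adapted brackets: the conditions with $i=1$ propagate values of $\om$ along the chain $e_2\to e_3\to\cdots$, showing that $[\om_1],[\om_2]$ are always present (as noted after the big proposition), and the remaining conditions, with $i,j\ge2$, become linear equations whose coefficients are precisely the $\al_{k,s}$ cut out by (b), (c), (d). The point of property (d) (vanishing of the low-order coefficients) is that it removes the obstructions that in the class $\LA_n^2(K)$ would force $\om_3$ or $\om_4$ into the cohomology \emph{without} an accompanying affine cocycle; here instead the equations become consistent with a choice of $\om$ having $\om(e_2\wedge e_n)\neq0$. Carrying this out gives $b_2(\Lg)\ge3$ and, more importantly, exhibits an affine $2$--cocycle $\om\in Z^2(\Lg,K)$, i.e.\ an affine class $[\om]\in H^2(\Lg,K)$.

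Once an affine $[\om]\in H^2(\Lg,K)$ is in hand, the rest is immediate from the results already proved: by Proposition~\ref{h2} the affine class yields a central extension $0\to K\to\Lh\to\Lg\to0$ with $\Lh\in\LF_{n+1}(K)$ and $\iota(K)=\Lz(\Lh)$, and then Theorem~\ref{ext} produces an affine structure on $\Lg$, given explicitly by the formula~\eqref{lsa}, $x\pkt y=\psi([\ov x,\phi(y)]_{\Lh})$, relative to an adapted basis of $\Lh$. This is the ``canonical'' affine structure of the statement: it is built functorially from the one-dimensional central extension, hence from the affine cohomology class, and it descends to a canonical left-invariant affine structure on the corresponding simply connected nilpotent Lie group. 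I would remark that the construction of $\om$ can be made uniform in $n$ because the adapted-basis brackets~\eqref{lie} and the cocycle equations~\eqref{ijk} have a shape independent of $n$ once the parameter ranges are respected; this is what lets the argument cover all $n\ge12$ at once rather than case by case.

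The main obstacle, and the place where real work is hidden, is the middle step: verifying that the cocycle equations~\eqref{ijk}, restricted to the subvariety of $\CA_n(K)$ cut out by (b), (c), (d), are actually solvable with $\om(e_2\wedge e_n)\neq0$, uniformly in $n$. In the neighbouring class $\LA_n^2(K)$ (properties (b), (c) but \emph{not} (d)) the analogous system forces an extra cocycle of type $\om_3$ or $\om_4$ that is non-affine, so the conclusion fails there; the whole point is that dropping property (d) breaks the solvability. Thus one must track exactly which combinations of the $\al_{k,s}$ appear as coefficients in~\eqref{ijk} and check that property (d) is precisely the condition making the obstruction vanish. This is a finite but delicate bookkeeping argument with the binomial coefficients in~\eqref{lie}, and it is the genuine content of the theorem; the cohomological packaging via Proposition~\ref{h2} and Theorem~\ref{ext} is then routine.
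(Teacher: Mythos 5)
Your reduction of the final claim is correct and is exactly the paper's route: once an affine class $[\om]\in H^2(\Lg,K)$ exists, Proposition~\ref{h2} produces a central extension $0\to K\to\Lh\to\Lg\to0$ with $\iota(K)=\Lz(\Lh)$ and $\Lh\in\LF_{n+1}(K)$, and Theorem~\ref{ext} then yields the canonical affine structure~\eqref{lsa}; an affine class is also automatically independent of $[\om_1],[\om_2]$, giving $b_2(\Lg)\ge 3$. The gap is that the first two claims are the entire content of the theorem, and you never actually exhibit the affine cocycle. You set up the system~\eqref{ijk}, assert that ``carrying this out'' yields an $\om$ with $\om(e_2\wedge e_n)\ne 0$ for every $\Lg\in\LA_n^1(K)$ and every $n\ge 12$, and then correctly identify this verification as ``the place where real work is hidden'' --- but the work is not done. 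The appeal to the equations having ``a shape independent of $n$'' cannot substitute for it: the table for $n\le 11$ shows that existence of an affine class flips on and off under delicate polynomial conditions in the $\al_{k,s}$, and for the sibling class $\LA_n^2(K)$, $n\ge 13$, existence genuinely depends on a polynomial condition $P_n\equiv 0$. So uniformity in $n$ is precisely what must be proved, not presumed. (The paper itself defers this computation to REDUCE and to an extended version.)

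A second, concrete problem is that your translation of the defining properties into adapted structure constants is wrong, so the normal form you would feed into~\eqref{ijk} is not the right one. In an adapted basis $\Lg^1=\s\{e_3,\dots,e_n\}$, so the only codimension-one $U\supseteq\Lg^1$ that could satisfy $[U,\Lg^1]\subseteq\Lg^4$ is $\s\{e_2\}\oplus\Lg^1$; since $[e_2,e_3]=\al_{2,5}e_5+\cdots$ while all other relevant brackets land in $\Lg^4$ automatically, property (b) is exactly $\al_{2,5}\ne 0$, not a condition on $2\al_{2,5}+\al_{3,7}$. Likewise $[\Lg^1,\Lg^1]$ is controlled by $[e_3,e_4]=\al_{3,7}e_7+\cdots$ (the brackets $[e_2,e_3]$ and $[e_2,e_4]$ you invoke do not lie in $[\Lg^1,\Lg^1]$, since $e_2\notin\Lg^1$), so property (d) is exactly $\al_{3,7}=0$, and property (c) is $\al_{n/2,n}=0$ for even $n$. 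Getting these right matters: combined with the Jacobi relations they force a further cascade of vanishing (already $\al_{4,9}=0$ in dimension $9$), and it is this cascade that makes the cocycle system solvable on $\LA_n^1(K)$. Your description of the contrast with $\LA_n^2(K)$ is also inaccurate: there the obstruction is not an extra non-affine $\om_3$ or $\om_4$ in $H^2$, but the failure of the single polynomial identity $P_n\equiv 0$.
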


\begin{prop}
Let $\Lg\in \LA^2_{12}(K)$. Then $H^2(\Lg,K)=\s \{[\om_1],[\om_2],[\om]\}$,
where $\om$ is an affine $2$--cocycle.
\end{prop}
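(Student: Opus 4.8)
The plan is to compute $H^2(\Lg,K)$ directly from the cochain complex using an adapted basis $(e_1,\dots,e_{12})$ whose Lie algebra law lies in $\CA_{12}(K)$, exploiting the defining properties (b), (c) and \emph{not} (d) of the class $\LA^2_{12}(K)$. First I would translate these isomorphism invariants into the language of the structure constants $\al_{k,s}$, $(k,s)\in\CI_{12}$, as announced after the list of properties; this pins down exactly which $\al_{k,s}$ vanish (and which combinations are forced to be nonzero) for algebras in $\LA^2_{12}$. In particular property (d), $[\Lg^1,\Lg^1]\subseteq\Lg^6$, fails, so some low-weight bracket $[e_i,e_j]$ with $i,j\ge 2$ has a nonzero component outside $\Lg^6$; this is the key feature that will kill the would-be cocycles $\om_\ell$ for $\ell\ge 3$ and also cut down $Z^2$ sufficiently.

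Next I would set up the $2$--cocycle condition \eqref{ijk} for the trivial module $K$. Since $\Lg$ is filiform with $\Lg^1=\s\{e_3,\dots,e_{12}\}$ and $\om$ is $K$--valued, a cocycle $\om$ is determined by its values $\om(e_i\wedge e_j)$ with $i+j$ small: by the Lemma proved above (setting $i=1$, $k=n$ in \eqref{ijk}) an affine cocycle must have $\om(e_j\wedge e_{12})=0$ for $3\le j\le 12$, and more generally the relations \eqref{ijk} with $i=1$ propagate the values of $\om$ on $e_2\wedge e_j$ and $e_3\wedge e_j$ through the whole table. The standard fact (noted in the text: $[\om_1],[\om_2]$ always lie in $H^2$) gives a two-dimensional space; I would then show that, under the constraints coming from (b),(c),$\neg$(d), the only further degree of freedom in $Z^2(\Lg,K)$ modulo $B^2(\Lg,K)$ is the affine direction — i.e. the value $\om(e_2\wedge e_{12})$ (equivalently $\om(e_1\wedge e_{12})$) can be chosen freely, and every other solution of \eqref{ijk} is cohomologous to a combination of $\om_1,\om_2$ and this affine cocycle $\om$. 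Computing $B^2(\Lg,K)$ is routine: a coboundary $df$ satisfies $(df)(x\wedge y)=-f([x,y])$, so its values are determined by $f$ on $\Lg^1$, and one checks which cocycles this removes.

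The main obstacle is the bookkeeping: showing that the cocycle equations \eqref{ijk}, once the vanishing pattern of the $\al_{k,s}$ for $\LA^2_{12}$ is substituted, have solution space of dimension exactly $3$ modulo coboundaries, with no exceptional subloci appearing (unlike, say, in $\CA_{9,2}$ or $\CA_{10,8}$, where an extra cocycle $\om'$ or a loss of affineness occurs). Concretely I expect to verify that $\om_3,\om_4$ fail to be cocycles here precisely because property (d) is violated, so no "$\om_\ell$ with $\ell\ge 3$" survives, and that property (c) (abelianness of $\Lg^{(n-4)/2}=\Lg^4$) together with (b) forces enough of the higher $\al_{k,s}$ to vanish that no additional cocycle $\om'$ appears. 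Once the dimension count gives $b_2(\Lg)=3$ with a basis $[\om_1],[\om_2],[\om]$ and $\om$ affine in the sense of the Definition (nonzero on $\Lz(\Lg)\wedge\Lg$), the earlier Lemma guarantees $[\om]$ is itself an affine class, completing the proof; the computation is uniform in the remaining free parameters, which is why the class $\LA^2_{12}(K)$ can be treated at once. The REDUCE verification referenced in the text can serve as a check on the by-hand cocycle analysis.
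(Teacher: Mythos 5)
Your outline follows the only viable route---and the one the paper itself uses: pass to an adapted basis, encode the defining properties (b), (c), $\neg$(d) of $\LA^2_{12}(K)$ as explicit conditions on the structure constants $\al_{k,s}$, and solve the cocycle equations \eqref{ijk} modulo coboundaries. (The paper prints no proof of this Proposition; it is one of the results obtained by the REDUCE computation and deferred to the extended version, so the comparison can only be at the level of method.) The difficulty is that your text stops exactly where the proof would have to begin. Every substantive step is announced rather than performed: the translation of (b), (c), (d) into constraints on the $\al_{k,s}$ is not written down; the assertion that the solution space of \eqref{ijk} modulo $B^2(\Lg,K)$ has dimension exactly $3$, uniformly over the whole class and with no exceptional subloci, is flagged as ``the main obstacle'' and then simply asserted; and the existence of a third cohomology direction represented by a cocycle with $\om(e_1\wedge e_{12})\ne 0$ or $\om(e_2\wedge e_{12})\ne 0$ is something you ``expect to verify''. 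None of this can be waved through, because the statement is genuinely delicate: for the same class in dimension $n\ge 13$ the paper shows $b_2(\Lg)=2$ unless an additional polynomial condition $P_n\equiv 0$ holds, so the uniform answer $b_2(\Lg)=3$ together with an affine direction at $n=12$ is a special computational fact about that dimension, not a formal consequence of (b), (c), $\neg$(d). A complete proof must exhibit the linear system and a rank computation, exactly as is implicit in the tables for $n\le 11$.

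Two smaller points. First, the vanishing $\om(e_j\wedge e_{12})=0$ for $3\le j\le 12$ holds for \emph{every} $2$-cocycle, not only for affine ones; that is precisely the content of the Lemma you cite. Second, $\om(e_1\wedge e_{12})$ and $\om(e_2\wedge e_{12})$ are two independent invariants of the cohomology class (both annihilate coboundaries because $e_{12}$ is central), so they are not ``equivalent'' data; the affine direction has to be located inside the two-dimensional space of values they span, and which of the two survives the cocycle conditions depends on the $\al_{k,s}$.
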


The last result generalizes to higher dimensions as follows: 
for $\Lg\in \LA^2_{n}(K),\, n\ge 13$ the existence of an affine
cohomology class depends on one certain polynomial condition $P_n\equiv 0$
in the structure constants of $\Lg$:

\begin{thm}
Let $\Lg\in \LA^2_{n}(K),\, n\ge 13$. Then
\begin{equation*}
H^2(\Lg,K)=\begin{cases}
\s\{[\om_1],[\om_2],[\om]\} & \text{if $\Lg$ satisfies $P_n\equiv 0$ },\\[0.2cm]
\s\{[\om_1],[\om_2]\} & \text{otherwise}.
\end{cases}
\end{equation*} 
\end{thm}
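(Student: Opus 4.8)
The plan is to compute $H^2(\Lg,K)=Z^2(\Lg,K)/B^2(\Lg,K)$ explicitly in an adapted basis $(e_1,\ldots,e_n)$, exactly as in the $n\le 11$ cases of the preceding Proposition, but keeping $n$ as a parameter. First I would fix an adapted basis so that the law lies in $\CA_n(K)$ and the brackets are given by \eqref{lie}, with the $\al_{k,s}$ subject to the defining conditions of $\LA^2_n(K)$, namely properties (b),(c) and the negation of (d). Since $M=K$ is the trivial module, the cocycle condition is the single family of linear equations \eqref{ijk}: $\om([e_i,e_j]\wedge e_k)-\om([e_i,e_k]\wedge e_j)+\om([e_j,e_k]\wedge e_i)=0$ for $i<j<k$. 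Writing $\om$ in the dual basis $e_p^*\wedge e_q^*$ this is a homogeneous linear system whose coefficients are (known) polynomials in the $\al_{k,s}$. I would first use the relations $[e_1,e_i]=e_{i+1}$ (the case $i=1$ in \eqref{ijk}) to reduce: as in the Lemma for $\CA_n(K)$, setting $i=1$ forces most values $\om(e_j\wedge e_n)$ to vanish and, more generally, propagates the values of $\om$ along the $\ad e_1$-chains, so that $Z^2(\Lg,K)$ is cut down to a small explicit set of "free" entries governed by the remaining equations ($i\ge 2$).

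Next I would identify $B^2(\Lg,K)$: for $f\in\Lg^*$, $\partial f(e_i\wedge e_j)=-f([e_i,e_j])$, and since $\Lg$ is filiform with $\Lg^1$ spanned by $e_3,\ldots,e_n$ (together with $e_2$ coming from $[e_1,e_2]$), the coboundaries form an $(n-2)$-dimensional space (one gets $b_1(\Lg)=2$, so $\dim B^2 = n-2$). The cocycles $\om_1,\om_2$ of \eqref{omega} are always in $Z^2$ by the Lemma, and $[\om_1],[\om_2]$ are independent in $H^2$ for $n\ge 5$; properties (b) and (c), translated into the structure constants (as indicated in Section 2, the itemized list (a)--(d) "can more naturally be formulated in terms of structure constants of an adapted basis"), are precisely what forces the cocycles $\om_3,\om_4,\ldots$ obstructed. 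Concretely: (b) kills the would-be third basis cocycle $\om_3$ (this is the phenomenon already visible in the table, where $\om_3$ appears exactly in the "(a)" classes), and (c) together with $\neg$(d) pins down the structure so that the only remaining candidate for an extra cohomology class is a single affine cocycle $\om$, whose existence in $Z^2$ is governed by one linear equation among the $\al_{k,s}$ coming from \eqref{ijk} with a specific triple $(i,j,k)$. I would define $P_n$ to be exactly the polynomial (in fact, after the $\ad e_1$-reduction, an affine-linear expression) in the structure constants that is this solvability condition, verify that when $P_n\equiv 0$ the affine cocycle $\om$ exists and is not a coboundary (by the Lemma, no affine cocycle is a coboundary), giving $H^2=\s\{[\om_1],[\om_2],[\om]\}$, and that when $P_n\not\equiv0$ every cocycle is cohomologous to a combination of $\om_1,\om_2$, giving $H^2=\s\{[\om_1],[\om_2]\}$.

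The main obstacle is the bookkeeping of \eqref{ijk} uniformly in $n$: the brackets \eqref{lie} involve the double sum with binomial coefficients ${j-i-\ell-1\choose\ell}$, so the cocycle equations are genuinely $n$-dependent, and one must show that — under hypotheses (b),(c),$\neg$(d) — the system collapses to the same shape for every $n\ge 13$, with all the "higher" obstructions ($\om_\ell$, $\ell\ge 3$) either identically zero or forced to vanish, leaving exactly one scalar degree of freedom controlled by $P_n$. I expect this to require an induction on $n$ (or on the length of the $\ad e_1$-chains), peeling off $e_n$ and relating $\CA_n(K)$-cocycles to $\CA_{n-1}(K)$-cocycles, much as the deformation picture of Lemma \ref{vergne} suggests, together with a careful translation of (b),(c),(d) into vanishing/non-vanishing of specific $\al_{k,s}$-polynomials. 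The case $n=12$ (the preceding Proposition) and the explicitly computed case $n=13$ serve as the base of the induction and as a check that $P_{12}$ is vacuous while $P_{13}$ is the first genuinely nontrivial instance. The dichotomy on $b_2$ then follows, and in particular the existence of an affine cohomology class for $\Lg\in\LA^2_n(K)$ is equivalent to $P_n\equiv 0$, which dovetails with the affine-structure (non)existence results announced for $n\ge 13$.
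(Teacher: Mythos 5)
Your proposal is a plan rather than a proof, and the decisive steps are exactly the ones it leaves open. The paper itself contains no proof of this theorem (it is among the results deferred to the extended version, the underlying computations having been done in REDUCE), so the only available comparison is with the method visible in the $n\le 11$ computations. There your overall strategy is the right one and matches the paper's: reduce $Z^2(\Lg,K)$ modulo $B^2(\Lg,K)$ in an adapted basis, use \eqref{ijk} with $i=1$ to propagate values of $\om$ along the $\ad e_1$--chains, and note $\dim B^2(\Lg,K)=n-2$ from $b_1(\Lg)=2$. But the entire content of the theorem is the uniform-in-$n$ collapse of the remaining equations under (b), (c), $\neg$(d), and your sketch does not establish it: you never translate (b), (c), (d) into explicit conditions on the $\al_{k,s}$, you never exhibit $P_n$, and the induction on $n$ (peeling off $e_n$ and relating $\CA_n(K)$--cocycles to $\CA_{n-1}(K)$--cocycles) is only conjectured to work. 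Without these, the claim that exactly one extra degree of freedom survives, controlled by a single polynomial, remains an expectation rather than an argument.

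Two specific assertions are unsupported or likely wrong. First, you say $P_n$ should be, after reduction, an affine-linear expression in the structure constants; but the analogous solvability conditions the paper records for $n\le 11$, namely $\al=3\al_{4,10}(\al_{2,6}+\al_{3,8})-4\al_{3,8}^2$ and the polynomials $\be,\ga,\de$, are all genuinely quadratic, and since the coefficients of the linear system \eqref{ijk} are themselves polynomial in the $\al_{k,s}$, its rank conditions should be expected to have degree $\ge 2$. Second, the heuristic that property (b) is what ``kills $\om_3$'' and that $\om_3$ appears exactly in the classes satisfying (a) cannot be read off the table: the classes $\CA_{n,i}$ are cut out by conditions on structure constants and are not labelled by (a)--(d), and the paper never states the promised translation of (a)--(d) into the $\al_{k,s}$. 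Moreover $\LA_n^1(K)$ also satisfies (b) yet has $b_2\ge 3$ with an affine class always present, so the dichotomy for $\LA_n^2(K)$ must come from the interaction of (b), (c) and $\neg$(d); identifying that interaction, and with it $P_n$, is precisely the missing core of the proof.
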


In particular, $b_2(\Lg)=2$ for all algebras $\Lg\in \LA^2_n(K),\,n\ge 13$
not satisfying the polynomial condition $P_n\equiv 0$. 
These algebras have minimal second Betti numbers (among nilpotent Lie algebras)
and are candidates for Lie algebras without affine structures. In that direction
we could prove:

\begin{thm}
No Lie algebra $\Lg\in \LA^2_{13}(K)$ admits any affine structure.
\end{thm}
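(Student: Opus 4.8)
The plan is to combine the vanishing of the affine cohomology with a direct study of left-symmetric products. The first step is to note that $b_2(\Lg)=2$ for \emph{every} $\Lg\in\LA^2_{13}(K)$: by the preceding theorem $H^2(\Lg,K)$ equals $\s\{[\om_1],[\om_2]\}$ or $\s\{[\om_1],[\om_2],[\om]\}$ according as the polynomial condition $P_{13}\equiv0$ fails or holds, and one checks directly, from the form of $P_{13}$ and the (in)equalities defining the class, that $P_{13}\equiv0$ cannot occur on $\LA^2_{13}(K)$. Hence $\Lg$ has no affine $[\om]\in H^2(\Lg,K)$ (Proposition \ref{be2}), and by Proposition \ref{h2} and Theorem \ref{ext} no affine structure of canonical type, i.e.\ none arising from a filiform central extension $\Lh\in\LF_{14}(K)$. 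It remains to exclude all other affine structures. For this I would use the standard fact that an affine structure on $\Lg$ yields a faithful $\Lg$--module of dimension $14$: if $x\pkt y$ is the product and $L_xy:=x\pkt y$, then \eqref{lsa1} says $L_{[x,y]}=[L_x,L_y]$, so $L$ is a representation of $\Lg$, and $x\pkt(v,t):=(L_xv+tx,0)$ makes $\Lg\oplus K$ into a faithful module. So it suffices to show that no $\Lg\in\LA^2_{13}(K)$ admits a faithful module of dimension $14$.

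Fix an adapted basis $(e_1,\dots,e_{13})$, so that the brackets are given by \eqref{lie} with parameters $\{\al_{k,s}\mid(k,s)\in\CI_{13}\}$ subject to the Jacobi relations and to the (in)equalities of properties (b) and $\neg$(d) (property (c) is vacuous for odd $n$). Let $\rho\colon\Lg\to\mathfrak{gl}(M)$ be faithful with $\dim M=14$. Since $K$ is algebraically closed of characteristic $0$, Lie's theorem triangularizes $\rho(\Lg)$; the weights vanish on $[\Lg,\Lg]$, which has codimension $2$, so $\rho(e_{13})$ is in any case nilpotent, and since $e_{13}$ is central $\rho(e_{13})$ commutes with $\rho(\Lg)$ and must be nonzero on a single weight space — this reduces the problem to an action by strictly upper triangular matrices. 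From $[e_1,e_i]=e_{i+1}$ one has $\rho(e_{i+1})=[\rho(e_1),\rho(e_i)]$ for $2\le i\le12$, so $\rho$ is determined by the two nilpotent matrices $\rho(e_1),\rho(e_2)$ and the $\al_{k,s}$, and the remaining relations of \eqref{lie} become polynomial equations in their entries. Faithfulness forces $\rho(e_{13})=(\ad\rho(e_1))^{11}\rho(e_2)\ne0$; as $\ad\rho(e_1)$ has nilpotency index $2\la_1-1$, with $\la_1$ the size of the largest Jordan block of $\rho(e_1)$, this already requires $\la_1\ge7$. Putting $\rho(e_1)$ into Jordan form — one of finitely many types, all with $\la_1\ge7$ — and using the residual change-of-basis freedom to normalize as many entries of $\rho(e_2)$ as possible, one is left with a polynomial system over $K(\al_{k,s})$; the claim to be proved is that it is inconsistent, i.e.\ that the equations forcing $\rho$ to respect \eqref{lie}, together with the $\LA^2_{13}$--conditions, already imply $(\ad\rho(e_1))^{11}\rho(e_2)=0$ in every case.

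The main obstacle is precisely this last, case-by-case verification. It is a finite but sizeable elimination problem — one must show that each entry of $(\ad\rho(e_1))^{11}\rho(e_2)$ lies in the radical of the ideal generated by the compatibility relations — and, as with the cohomology computations, for $n=13$ it is carried out with the aid of a computer algebra system (REDUCE) once the reductions above have cut the number of unknowns, the number of admissible Jordan types, and the number of surviving parameters $\al_{k,s}$ down to a tractable size. It is here that the dimension $13$, rather than general $n\ge13$, is essential: one runs a concrete computation, whereas a uniform treatment of $\LA^2_n(K)$ for all $n\ge13$ would need a genuinely structural argument — which is why that stronger statement remains only a conjecture. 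A much smaller preliminary point, mentioned above, is the verification that the exceptional condition $P_{13}\equiv0$ never holds on $\LA^2_{13}(K)$, which is what legitimizes splitting the proof into the canonical case (disposed of by the cohomology) and the direct case.
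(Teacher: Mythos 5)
Your strategy is the same one the paper indicates: reduce the theorem to the statement $\mu(\Lg)\ge 15=\dim\Lg+2$, using the standard fact that a left-symmetric product gives the faithful $(n{+}1)$-dimensional module $x\mapsto(v,t)\mapsto(L_xv+tx,0)$, and then rule out faithful $14$-dimensional modules by a reduction to nilpotent representations followed by a finite elimination over the admissible Jordan types of $\rho(e_1)$ (the bound $\la_1\ge 7$ from $(\ad\rho(e_1))^{11}\rho(e_2)\ne0$ is correct, as is the observation that faithfulness is equivalent to $\rho(e_{13})\ne0$ because the center is one-dimensional). The paper itself states only that $\mu(\Lg)\ge 15$ has been proved and defers the verification to the references and the extended version, so at the level of detail available here the two arguments coincide.

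Two caveats. First, your opening claim that $P_{13}\equiv 0$ ``cannot occur'' on $\LA^2_{13}(K)$ and is checked ``directly from the form of $P_{13}$'' is not something you can actually verify ($P_{13}$ is never written down), and as you have set things up it is logically a \emph{consequence} of the theorem rather than a usable preliminary: if some $\Lg$ in the class satisfied $P_{13}\equiv 0$ it would carry an affine cohomology class and hence, by Theorem~\ref{ext}, an affine structure. Fortunately this step is also unnecessary — your $\mu(\Lg)\ge 15$ argument, if completed, excludes \emph{all} affine structures at once, so the split into a ``canonical case'' and a ``direct case'' should simply be dropped. Second, and more seriously, the decisive step — that for every Jordan type of $\rho(e_1)$ with $\la_1\ge 7$ the compatibility relations force $(\ad\rho(e_1))^{11}\rho(e_2)=0$ — is only described as a computer-algebra elimination, not exhibited; as written the proposal is a correct and well-motivated reduction, but the proof is not complete until that computation (or a structural replacement for it) is actually supplied.
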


More precisely we have proved that $\mu (\Lg)\ge 15$, where $\mu (\Lg)$
denotes the minimal dimension of a faithful $\Lg$--module.
If $\Lg$ admits an affine structure then it is not difficult to see that
$\mu (\Lg)\le \dim \Lg +1$. Hence the existence question of affine structures
is connected to the question of a refinement of Ado's theorem.
It is very difficult to compute the invariant $\mu(\Lg)$, in particular
for nilpotent Lie algebras. Then the adjoint representation is not
faithful. 
For abelian Lie algebras the invariant can be computed explicitly:
denote by $\lceil x \rceil$ the ceiling of $x$, i.e., the least integer
greater than or equal to $x$ and let
$\Lg$ be an abelian Lie algebra of dimension $n$ over an arbitrary
field $K$. Then $\mu(\Lg)= \lceil 2\sqrt{n-1}\rceil $.     
The nilpotent Lie algebras which are counterexamples to
the Milnor conjecture satisfy $\mu (\Lg)\ge \dim \Lg +2$.
For details see \cite{BU5}.
Returning to the cohomology $H^2(\Lg,K)$ we think it is interesting 
to study the following problem:

\begin{prob*}
Does a Lie algebra $\Lg\in \LA^2_{n}(K),\, n\ge 13$ satisfy
$\mu(\Lg)\ge n+2$ if and only if there is no affine $[\om]\in H^2(\Lg,K)$ ?
\end{prob*}

\end{document}